\newtheorem{theorem}{Theorem}[section]
\newtheorem{lemma}[theorem]{Lemma}
\newtheorem{proposition}[theorem]{Proposition}
\newtheorem{fact}[theorem]{Fact}
\theoremstyle{definition}
\theoremstyle{remark}
\newtheorem{remark}[theorem]{Remark}
\numberwithin{equation}{section}
\renewcommand{\span}{\mathrm{span}}
\newcommand{\dist}{\mathrm{dist}}
\newcommand{\conv}{\mathrm{conv}}
\newcommand{\R}{\mathbb{R}}
\newcommand{\E}{\mathbb{E}}
\newcommand{\N}{\mathbb{N}}
\newcommand{\X}{\mathrm{X}}
\renewcommand{\H}{\mathrm{H}}
\newcommand{\Y}{\mathrm{Y}}
\newcommand{\Z}{\mathrm{Z}}
\newcommand{\B}{\mathbf{B}}
\newcommand{\I}{\mathrm{I}}
\renewcommand{\S}{\mathbf{S}}
\renewcommand{\ker}{\mathrm{Ker}}
\renewcommand{\mod}{/}
\newcommand{\1}{\boldsymbol{1}}
\renewcommand{\to}{\rightarrow}
\begin{document}

\title{Convex-transitivity of Banach algebras via ideals}

\begin{abstract}
We investigate a method for producing concrete convex-transitive Banach spaces. The gist of the method is 
in getting rid of dissymmetries of a given space by taking a carefully chosen quotient. The spaces of interest here 
are typically Banach algebras and their ideals. We also investigate the convex-transitivity of ultraproducts and tensor products of Banach spaces. 
\end{abstract}

\author{Jarno Talponen}
\address{Aalto University, Institute of Mathematics, P.O. Box 11100, FI-00076 Aalto, Finland} 
\email{talponen@cc.hut.fi}
\date{\today}
\subjclass[2010]{Primary  46B04,  47L20; Secondary 46Mxx, 47L10}
\maketitle

\section{Introduction}

In this paper we study and construct Banach algebras with a rich isometry group, which, in a sense, comes close to  
acting transitively on the unit sphere. To facilitate the discussion, let us recall some basic notions.
We denote the closed unit ball of a Banach space $\X$ by $\B_{\X}$ and the unit sphere of $\X$ by $\S_{\X}$.
A Banach space $\X$ is called \emph{transitive} if for each $x\in \S_{\X}$ the orbit
$\mathcal{G}_{\X}(x)\stackrel{\cdot}{=}\{T(x)|\ T\colon \X\rightarrow \X\ \mathrm{is\ an\ isometric\ automorphism}\}$ is $\S_{\X}$. In other words, the isometry group acts transitively on the unit sphere. If $\overline{\mathcal{G}_{\X}(x)}=\S_{\X}$ (resp. $\overline{\conv}(\mathcal{G}_{\X}(x))=\B_{\X}$) 
for all $x\in\S_{\X}$, then $\X$ is called \emph{almost transitive} (resp. \emph{convex-transitive}). 
It was first reported by Pelczynski and Rolewicz in 1962 \cite{PR} that the space $L^{p}$ is almost transitive for 
$p\in [1,\infty)$ and convex-transitive for $p=\infty$ (see also \cite{Rol}). These concepts are motivated by 
\emph{Mazur's rotation problem} appearing in \cite[p.242]{Ba}, which remains open.
We refer to \cite{BR2} for a survey and discussion on the matter.

By applying categorical methods one can verify the \emph{existence} of a rich class of almost transitive Banach spaces (see e.g. the above survey). However, not so many \emph{concrete} almost transitive or even convex-transitive spaces are known. Specimens of such spaces can be found for example in \cite{Ca5}, \cite{Ca6}, \cite{GJK}, \cite{Rambla} and \cite{RT}. The purpose of this paper is to investigate a method for producing concrete convex-transitive Banach algebras.

In many apparently 'fairly homogenous' Banach algebras there exists an obvious obstruction for the convex-transitivity of the space. For example, in $\ell^{\infty}$, whose isometry group has the anticipated form (\cite[2.f.14]{LTI}), there exist both finitely and infinitely supported vectors. The cardinality of the supports of the vectors are preserved under the isometries, thus $y \notin \overline{\conv}(\mathcal{G}_{\ell^{\infty}}(x))$ for $y\in \ell^{\infty}\setminus c_{0},\ x\in c_{0}$,
and therefore $\ell^{\infty}$ fails to be convex-transitive. Here we consider a natural remedy to the non-convex-transitivity by removing the obstruction caused be the finitely supported vectors, while roughly retaining the structure of the space. Namely, $\ell^{\infty}\mod c_{0}=C(\beta\omega \setminus \omega)$ is a convex-transitive space, as was observed by Cabello (\cite{Ca5}). In fact, this was established mainly from topological considerations, but here we obtain an alternative route to this result, without any reference to $\beta\omega$, by observing that $c_{0}$ is a suitable ideal of $\ell^{\infty}$ (see Section \ref{sect: c_0ideal}).

For another type of example, recall the standard Calkin algebra 
\[\mathcal{C}(\H)=\mathcal{B}(\H)\mod \mathcal{K}(\H),\] 
$\H$ a separable complex Hilbert space, is a quotient of bounded linear operators on $\H$ (i.e. $\mathcal{B}(\H)$)
by the ideal of compact operators, $\mathcal{K}(\H)$. The Calkin algebra is convex-transitive, as was observed by Becerra-Guerrero and Rodriguez-Palacios in \cite{BR_manuscripta}. In $\mathcal{B}(\H)$ there appear intuitively two 'levels' of operators: the operator norm closure of finite-rank operators, i.e. the ideal of compact operators $\mathcal{K}(\H)$, and the $\mathrm{SOT}$-closure of these operators, i.e. the ideal of all operators $\mathcal{B}(\H)$. Clearly, in $\mathcal{B}(\H)$ the compact operators are not suitable for approximating general operators in the operator norm but in passing to the Calkin algebra this particular problem ceases to exist. An optimistic philosophy behind this example is that dividing out the compact operators homogenizes the space $\mathcal{B}(\H)$, as there will be in a sense only a single 'level' left, and the high degree of symmetry of the Hilbert space takes care of the rest in achieving  convex-transitivity.

For general Banach algebras the situation is different, as there can exist more natural ideals and less 
symmetry. Still, the convex-transitive examples $\mathcal{C}(\H)$ and $\ell^{\infty}\mod c_{0}$ are fairly different. This suggests that the described rationale for the existence of convex-transitive spaces could be applicable in various settings.
 
In fact, it turns out that this kind of approach is fruitful in connection with surprisingly many kinds of unital Banach algebras. The main results here are about constructing convex-transitive Banach spaces by taking a quotient with respect to a carefully chosen subspace, which is typically an ideal. We will study symmetries of function spaces on some known topological spaces and on infinite trees. We investigate the symmetries of Banach algebras with ideals defined in terms of ultrafilters and ideals on sets. We will give examples of convex-transitive Corona type algebras related to 
the previous Calkin algebra example. At the end of the paper we comment on the symmetries of projective tensor products. To conclude, by using our method, we obtain various natural examples of convex-transitive spaces, and in the constructions we also observe some connections to other branches of mathematics, such as Ergodic theory, or ideals on regular cardinals. 

\subsection{Preliminaries}
We denote by $\X$, $\Y$ and $\Z$ real Banach spaces. The closed unit ball and the unit sphere are denoted by 
$\B_{\X}$ and $\S_{\X}$, respectively. We denote by 
\[\mathcal{G}_{\X}\stackrel{\cdot}{=}\{T|\ T\colon \X\rightarrow \X\ \mathrm{is\ an\ isometric\ automorphism}\}\] 
the isometry group of $\X$. We refer to \cite{fhhmz}, \cite{DU}, \cite{En}, \cite{FJ1}, \cite{FJ2}, \cite{Heinrich}, 
\cite{Kechris} and \cite{Rol} for suitable background information. 

Given a set $\Gamma$ we denote by $\mathcal{P}(\Gamma)=\{A\subset \Gamma\}$ the power set of $\Gamma$.
Given a compact space $K$ we will denote by $\mathrm{M}(K)$ the space of regular Borel measures with the total variation norm. If $f\in L^{p}=L^{p}([0,1],m)$, where $m$ is the Lebesgue measure and $A\subset [0,1]$ is measurable set with 
$m(A)>0$, we write $\E(f|A)=\frac{1}{m(A)}\int_{A}f\ dm$.

We denote by $\mathcal{B}(\X,\Y)$ the space of bounded linear operators $\X\to \Y$. We write 
$\mathcal{B}(\X)=\mathcal{B}(\X,\X)$ and $\I$ is the identity operator. 
If $\X$ is a Banach algebra, we sometimes consider $\mathcal{B}(\X)$ as a left $\X$-Banach module, where 
the operation $x\cdot T$ is given by $(x\cdot T)[y]=xT[y]$ for $T\in \mathcal{B}(\X)$ and $x,y\in \X$.
In the case where $\X$ is a function space, say, on $\Omega$, we will adopt the notation $1_{A}\cdot \I$, 
where $\I$ is the identity mapping and $1_{A}$ is the characteristic function (or indicator function) supported on 
$A\subset \Omega$ in a sensible way depending on the setting. 

As customary, we will denote by $\mathcal{C}(\X)$ the Calkin type of Banach algebra 
$\mathcal{B}(\X)\mod \mathcal{K}(\X)$. We denote by $\mathcal{S}(\X)\subset \mathcal{B}(\X)$ the ($2$-sided) ideal of operators with a separable range.

Given a Banach space $\X$ with a locally convex topology $\tau$ and a subspace $\Y\subset\X$, we sometimes consider 
$\X\mod \Y$ in topology $\tau$. This is a slight abuse of notation and is to be understood in the sense that $\X\mod \Y$ has a locally convex topology consisting of sets of the form $U+\Y,\ U\in \tau$. Of course, the subspace $\Y$ 
need not be $\tau$-closed, a priori, and in this case the singletons in $\X\mod \Y$ are not $\tau$-closed.

An example of a locally convex topology on $\mathcal{B}(\X)$ appearing here is the \emph{topology of uniform convergence on separable subspaces}. This topology $\tau$ is generated by a basis of sets of the following form:
\[\{T\in \mathcal{B}(\X):\
\|T_{1}|_{\Y}-T|_{\Y}\|_{\mathcal{B}(\Y,\X)},\|T_{2}|_{\Y}-T|_{\Y}\|_{\mathcal{B}(\Y,\X)},\ldots,\|T_{n}|_{\Y}-T|_{\Y}\|_{\mathcal{B}(\Y,\X)}<\epsilon\},\]
where $T_{1},T_{2},\ldots,T_{n}\in \mathcal{B}(\X)$, $\Y\subset\X$ is a separable subspace and $\epsilon>0$.
For example, in $\mathcal{B}(\ell^{p}(\omega_{1})),\ 1\leq p<\infty,$ the operators $x\mapsto 1_{[0,\alpha)}\cdot x$ 
converge to $\I$ as $\alpha\to \omega_{1}$ in the above topology $\tau$, but not in the operator norm topology. If $\X$ is separable itself, then $\tau$ coincides with the operator norm topology. The \emph{strong operator topology} ($\mathrm{SOT}$) on $\mathcal{B}(\X,\Y)$ is the topology inherited from $\Y^{\X}$ with the product topology.

Suppose that $\Gamma$ is a set, $\mathcal{I}$ is an ideal on $\Gamma$, 
$\{x_{\alpha}: \alpha \in \Gamma\}$ is a subset of a Hausdorff topological space $X$ and 
$x\in X$. If for each open neighbourhood $U$ of $x$ it holds that 
$\{\alpha\in \Gamma:\ x_{\alpha}\notin U\}\in \mathcal{I}$, then we say that $x_{\alpha}$ converges to $x$ with respect 
to the ideal $\mathcal{I}$, or $\lim_{\alpha,\mathcal{I}}x_{\alpha}=x$ in short. 
In this case, given $\mathcal{F}=\{\Gamma \setminus I:\ I\in \mathcal{I}\}$, the dual filter of $\mathcal{I}$, we 
alternatively denote $\lim_{\alpha,\mathcal{F}}x_{\alpha}=x$.

If $T$ is a topological space we denote by $CB(T)$ the space of continuous bounded functions on $T$ with the sup norm.
Given an ideal $\mathcal{I}$ on $T$, we denote by $CB_{0}(T,\mathcal{I})\subset CB(T)$ the closed subspace of functions $f$ 
such that $\lim_{t,\mathcal{I}}|f(t)|=0$. This is truly a closed subspace, indeed, fix
$u\in \overline{CB_{0}(T,\mathcal{I})}$ and $\epsilon>0$. Let $v\in CB_{0}(T,\mathcal{I})$ such that 
$\|u-v\|<\epsilon/2$ and $I\in \mathcal{I}$ such that $|v(t)|< \epsilon/2$ for $t\in T\setminus I$.
By the triangle inequality $|u(t)|< \epsilon$ for $t\in T\setminus I$ and thus $\lim_{t,\mathcal{I}}u(t)=0$.

An element $x\in \S_{\X}$ is called a \emph{big point} if $\overline{\conv}(\mathcal{G}(x))=\B_{\X}$. Given a locally convex topology $\tau$ on $\X$, the space $\X$ is said to be $\tau$-convex-transitive if $\overline{\conv}^{\tau}(\mathcal{G}(x))=\B_{\X}$ for each $x\in \S_{\X}$.

In order to check the convex-transitivity of a vector-valued function space one often wishes to run 
approximations by convex combinations simultaneously in infinitely many coordinate spaces. In such a
case a uniformity for the convergence of the approximations is required. For this purpose the 
uniformly convex-transitive Banach spaces were introduced in \cite{RT}, and they are defined as follows.
Provided that the space $\X$ under discussion is understood, we denote
\[C_{n}(x)=\left\{\sum_{i=1}^{n}a_{i}T_{i}(x)|\ T_{1},\ldots,T_{n}\in \mathcal{G}_{\X},\ a_{1},\ldots,a_{n}\in [0,1],\ \sum_{i=1}^{n}a_{i}=1\right\}\]for $n\in \N$ and $x\in \S_{\X}$.
We call a Banach space $\X$ \emph{uniformly convex-transitive} if for each $\varepsilon>0$ there
exists $n\in \N$ satisfying the following condition: 
\begin{equation}\label{eq: xydist}
\dist(y,C_{n}(x))\leq \varepsilon\quad \mathrm{for\ all}\ x,y\in \S_{\X},
\end{equation}
that is,
\[\lim_{n\rightarrow \infty}\sup_{x,y \in \S_{\X}}\dist(y,C_{n}(x))=0.\]
For each $\varepsilon>0$ we denote by $K(\varepsilon)$ the least integer $n$, which satisfies \eqref{eq: xydist} 
and such $K(\varepsilon)$ is called \emph{the constant of uniform convex transitivity of $\X$.}

The following elementary fact is applied here frequently.
\begin{fact}\label{Fact1}
For each $x,y,z\in \S_{\X}$ we have $\dist(x,C_{n+m}(z))\leq \dist(x,C_{n}(y))+\dist(y,C_{m}(z))$.
\end{fact}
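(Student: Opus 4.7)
The plan is a direct substitution argument followed by a single triangle inequality. Fix $\varepsilon>0$ and, using the definition of distance, pick near-optimal convex combinations
\[u=\sum_{i=1}^n a_i T_i(y)\in C_n(y),\qquad v=\sum_{j=1}^m b_j S_j(z)\in C_m(z),\]
satisfying $\|x-u\|\le \dist(x,C_n(y))+\varepsilon$ and $\|y-v\|\le \dist(y,C_m(z))+\varepsilon$; here $T_i,S_j\in \mathcal{G}_{\X}$ and the scalars $a_i,b_j$ are nonnegative and sum to $1$.

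Next I would ``teleport'' $u$ onto the orbit of $z$ by substituting $v$ for $y$, setting
\[w \;=\; \sum_{i=1}^n a_i\,T_i(v) \;=\; \sum_{i,j} a_i b_j\,(T_i S_j)(z).\]
Since $\mathcal{G}_{\X}$ is closed under composition and the weights $\{a_i b_j\}$ form a probability distribution on $\{1,\ldots,n\}\times\{1,\ldots,m\}$, $w$ is a convex combination of orbit points of $z$. The triangle inequality then gives
\[\|x-w\|\le \|x-u\|+\Big\|\sum_i a_i T_i(y-v)\Big\|\le \|x-u\|+\|y-v\|,\]
where the second bound uses that each $T_i$ is an isometry and $\sum_i a_i=1$; hence $\|x-w\|\le \dist(x,C_n(y))+\dist(y,C_m(z))+2\varepsilon$, and letting $\varepsilon\to 0$ concludes.

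The step I expect to be the main obstacle is the book-keeping of the index on $C$. The expansion above exhibits $w$ as a convex combination of $nm$ orbit points, so the natural conclusion is $w\in C_{nm}(z)$. Since $C_k(z)\subseteq C_{k+1}(z)$ (zero weights are permitted in the defining convex combinations), the stated conclusion $w\in C_{n+m}(z)$ is strictly stronger whenever $(n-1)(m-1)>1$. I would first check whether the intended conclusion is $C_{nm}(z)$ (a minor typographical slip on the exponent), and otherwise try an inductive refinement that collapses redundant orbit terms to trim the count from $nm$ down to $n+m$. Failing such a collapse, the $C_{nm}(z)$ bound already suffices for iterative applications of the fact and for controlling the asymptotics of the constant $K(\varepsilon)$.
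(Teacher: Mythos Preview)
Your approach is the natural one, and since the paper itself offers no proof—the statement is simply flagged as an ``elementary fact'' and left to the reader—there is nothing to compare your argument against. The substitution-and-triangle-inequality computation you give is correct and yields
\[
\dist(x,C_{nm}(z))\le \dist(x,C_n(y))+\dist(y,C_m(z)).
\]

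Your concern about the index is well founded. Substituting $v=\sum_j b_j S_j(z)$ into $u=\sum_i a_i T_i(y)$ unavoidably produces the double sum $\sum_{i,j}a_i b_j\,(T_iS_j)(z)$ with $nm$ orbit terms, and there is no apparent mechanism to collapse this to $n+m$ terms in a general Banach space. The index $n+m$ in the stated fact is almost certainly a slip for $nm$. In the paper the fact is invoked only to concatenate a bounded number of approximation stages when verifying uniform convex-transitivity (e.g.\ passing through an intermediate element such as an indicator function), and for that purpose any finite bound on the resulting index suffices; the $C_{nm}$ version is exactly what is needed, and you should not spend effort trying to sharpen it to $n+m$.
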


Recall that a contractive linear projection $P\colon \X\to \Y$ is called an \emph{isometric reflection projection}
if $\I-2P\in \mathcal{G}_{\X}$, or equivalently, if $\|\I-2P\|=1$. If $\Z\subset \X$ is a subspace such that 
there is an isometric reflection projection $P\colon \X^{**}\to \Z^{\bot\bot}$, then $\Z$ is a \emph{$u$-ideal} of $\X$.
For discussion on $u$-ideals we refer to \cite{GKS}, where they were first introduced. 

\section{Some concrete convex-transitive Banach algebras}

Recall that the Sorgenfrey line $S$ is the set $\R$ with the topology generated by half open intervals $[a,b)\subset \R$.
Observe that these intervals are clopen and thus $S$ is not locally compact.
\begin{theorem}\label{Sorgenfrey}
Let $S$ be the Sorgenfrey line, $n\in\N$, and let $\mathcal{I}$ be the ideal of sets $I\subset S^{n}$ such that $I\subset [-k,k]^{n}$ for some $k\in \N$. Then $CB_{0}(S^{n},\mathcal{I})$ is uniformly convex-transitive. 
\end{theorem}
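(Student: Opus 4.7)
My plan is to apply Fact \ref{Fact1} via a canonical template $h\in \S_{CB_{0}(S^{n},\mathcal{I})}$: uniform convex-transitivity reduces to two uniform estimates, (a) $\dist(h,C_{m}(x))\le\epsilon$ for a constant $m=m(\epsilon)$ independent of $x$, and (b) $\dist(y,C_{N}(h))\le\epsilon$ for a constant $N=N(\epsilon)$ independent of $y$. Fact \ref{Fact1} then yields $\dist(y,C_{N+m}(x))\le 2\epsilon$ for all $x,y\in\S$. I take $h=1_{[0,1)^{n}}$, which lies in $\S_{CB_{0}(S^{n},\mathcal{I})}$ because $[0,1)^{n}$ is a bounded Sorgenfrey-clopen set, so $h$ is continuous, of sup norm $1$, and vanishes outside a bounded set.

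Two families of isometries are central. First, any Sorgenfrey-homeomorphism $\phi\colon S^{n}\to S^{n}$ that preserves $\mathcal{I}$ (maps bounded sets to bounded sets) induces the composition isometry $U_{\phi}(f)=f\circ\phi^{-1}$; this supplies orientation-preserving affine maps $t\mapsto At+b$ with positive diagonal $A$ and coordinate permutations. Second, each Sorgenfrey-clopen $C\subset S^{n}$ gives a continuous $\pm 1$-valued function $w_{C}=2\cdot 1_{C}-1$, yielding an isometric automorphism $M_{w_{C}}(f)=w_{C}\cdot f$. For step (a) I use a two-term sign-flip averaging: locate $t_{0}$ with $|x(t_{0})|\ge 1-\epsilon$, use Sorgenfrey-continuity to find a half-open box $R\ni t_{0}$ on which $|x|\ge 1-2\epsilon$, then after a global sign flip and an affine composition mapping $[0,1)^{n}$ onto $R$, assume $x\ge 1-2\epsilon$ on $[0,1)^{n}$. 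With $C=[0,1)^{n}$,
\[
\tfrac{1}{2}\bigl(x+w_{C}\cdot x\bigr)=x\cdot 1_{[0,1)^{n}}
\]
lies within $2\epsilon$ of $h$ in sup norm, giving $m=2$.

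For step (b), the isometric images of $h$ have the form $w\cdot(h\circ\phi^{-1})$, i.e.\ signed indicators supported on bounded Sorgenfrey-clopen sets. Given $y\in\S$ and $\epsilon>0$, I would truncate to the essential support $[-k,k]^{n}$ (with $k$ chosen so that $|y|<\epsilon$ off this cube), $\epsilon$-quantize the range of $y$ into $O(1/\epsilon)$ levels, and refine a finite half-open box partition of $[-k,k]^{n}$ so that $y$ lies within a single quantization level on each box; averaging suitable signed clopen indicators with weights matching the quantized levels then produces a convex combination of $N(\epsilon)$ isometric images of $h$ that $\epsilon$-approximates $y$ in sup norm. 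The main obstacle is precisely this last step: Sorgenfrey-continuity does not by itself furnish a uniform modulus on $[-k,k]^{n}$, so obtaining a partition complexity $N(\epsilon)$ that depends only on $\epsilon$ requires exploiting the compatibility of half-open boxes with right-continuity together with signed combinations on overlapping (rather than just disjoint) clopen sets to absorb fluctuations at the refinement boundaries. Once (b) is delivered with uniform $N(\epsilon)$, combining with (a) via Fact \ref{Fact1} yields the uniform convex-transitivity constant $K(2\epsilon)\le N(\epsilon)+2$.
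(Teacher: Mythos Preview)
Your step (a) is correct and is exactly the paper's opening move: after a Sorgenfrey homeomorphism carrying a small box on which $|x|>1-\epsilon$ onto $[0,1)^n$, the two-term average $\tfrac{1}{2}(x+w_C\cdot x)$ with $C=[0,1)^n$ lands within $2\epsilon$ of $h=1_{[0,1)^n}$, so $m=2$.

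Step (b) has a genuine gap, and it is precisely the one you flag but do not close. A finite half-open box partition on whose pieces $y$ is nearly constant need not exist. For $n=1$ take $y(t)=\sin\bigl(1/(1-t)\bigr)\cdot 1_{[0,1)}(t)$: this is right-continuous everywhere, hence Sorgenfrey-continuous, has norm $1$, and is supported in $[0,1)$, so $y\in\S_{CB_0(S,\mathcal{I})}$. Any finite partition of an interval containing $[0,1)$ into half-open subintervals has a piece meeting $(1-\delta,1)$ for every $\delta>0$, and on that piece $y$ has oscillation $2$. So the refinement you propose cannot be achieved, and the suggested remedy via ``overlapping clopen sets to absorb fluctuations at boundaries'' has no concrete mechanism behind it. Incidentally, your worry that the \emph{number} of boxes might spoil $N(\epsilon)$ is a red herring: once boxes are grouped by quantization level you would only need $O(1/\epsilon)$ sign-change isometries regardless of how many boxes there are; the real obstruction is that no finite box partition does the job at all.

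The paper's route abandons boxes for general clopen level-set approximants. Lemma~\ref{lm: clopen} (proved via the \v{C}ech--Stone compactification of $S^n$, using only $0$-dimensionality) says that for any continuous $f$ and $a<b$ there is a clopen $C$ with $f^{-1}([b,\infty))\subset C\subset f^{-1}((a,\infty))$. Iterating over the $2k$ quantization levels yields a partition of $S^n$ into exactly $2k$ clopen sets $C_0,\ldots,C_{2k-1}$ (typically far from boxes) with $\mathrm{osc}(y,C_j)=O(1/k)$. With $B_i=\bigcup_{j\ge i}C_j$ and sign-changes $R_i(f)=(1_{B_i}-1_{B_i^c})f$ one obtains $\bigl\|y-\tfrac{1}{2k}\sum_{i=0}^{2k-1}R_i h\bigr\|=O(1/k)$, giving $N(\epsilon)=O(1/\epsilon)$ uniformly in $y$; Fact~\ref{Fact1} then finishes.
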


\begin{lemma}\label{lm: clopen}
Let $T$ be a Hausdorff, $0$-dimensional topological space. Let $f\colon T\to \R$ be a continuous function and $a,b\in \R,\ a<b$. Then there exists a clopen set $C\subset T$ such that $f^{-1}((a,\infty))\supset C\supset f^{-1}([b,\infty))$.
\end{lemma}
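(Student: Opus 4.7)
Let $A=f^{-1}([b,\infty))$ and $U=f^{-1}((a,\infty))$; then $A$ is closed, $U$ is open, and $A\subset U$. I want a clopen $C$ with $A\subset C\subset U$, and the plan is to build $C$ as a union of pieces drawn from a disjointified clopen cover of $T$.

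At each $t\in T$ I produce a local clopen neighborhood of the appropriate type. If $f(t)>a$, continuity of $f$ gives an open neighborhood of $t$ on which $f>a$, and $0$-dimensionality refines this to a clopen $V_t\subset U$; call $V_t$ \emph{inner}. If $f(t)\leq a$, then $f(t)<b$, and continuity together with $0$-dimensionality yield a clopen $V_t\subset f^{-1}((-\infty,b))$, which is disjoint from $A$; call $V_t$ \emph{outer}. Since $a<b$, every point is handled, and this produces a clopen cover $\{V_t\}_{t\in T}$ of $T$, each member labelled inner or outer.

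Next I refine the cover to a pairwise disjoint clopen cover $\{W_\alpha\}$ of $T$ with each $W_\alpha$ contained in some chosen $V_{t_\alpha}$. In the Lindel\"of case this is standard: extract a countable subcover $(V_{t_n})_{n\in\N}$ and set $W_n=V_{t_n}\setminus\bigcup_{m<n}V_{t_m}$, which is a finite Boolean combination of clopens, hence clopen; the $W_n$ are pairwise disjoint and cover $T$. This disjointification is the main technical step, and in the generality of $0$-dimensional Hausdorff spaces considered here one handles it via paracompactness or a transfinite extension of the countable construction.

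Finally, set $C:=\bigcup\{W_\alpha:V_{t_\alpha}\ \text{is inner}\}$. Both $C$ and $T\setminus C=\bigcup\{W_\alpha:V_{t_\alpha}\ \text{is outer}\}$ are unions of clopens and therefore open, so $C$ is clopen. For $t\in A$, the piece $W_\alpha\ni t$ cannot lie inside an outer $V_{t_\alpha}$, since $V_{t_\alpha}\cap A=\emptyset$ would contradict $t\in V_{t_\alpha}\cap A$; thus $V_{t_\alpha}$ is inner and $t\in C$, giving $A\subset C$. Each inner $V_{t_\alpha}\subset U$ gives $C\subset U$, completing the construction.
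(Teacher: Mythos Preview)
Your argument has a real gap at the disjointification step. You claim the clopen cover $\{V_t\}$ can be refined to a pairwise disjoint clopen cover, handling the Lindel\"of case and then waving at ``paracompactness or a transfinite extension'' for the general $0$-dimensional Hausdorff setting. Neither option is available. A $0$-dimensional Hausdorff space need not be paracompact: the Sorgenfrey plane $S^2$---precisely one of the spaces to which this lemma is applied in the paper---is $0$-dimensional and Hausdorff but not normal, hence not paracompact. And the transfinite recipe $W_\alpha=V_{t_\alpha}\setminus\bigcup_{\beta<\alpha}V_{t_\beta}$ breaks at limit ordinals, where $\bigcup_{\beta<\alpha}V_{t_\beta}$ is open but in general not closed, so $W_\alpha$ need not be open. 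The existence of disjoint clopen refinements of arbitrary open covers is exactly ultraparacompactness, which is strictly stronger than $0$-dimensionality for Hausdorff spaces.

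The paper avoids any infinite partition by passing to the \v{C}ech--Stone compactification $\beta T$ (legitimate since $0$-dimensional Hausdorff implies completely regular). Clopen subsets $A\subset T$ have clopen closures $\overline{A}\subset\beta T$; a family $\mathcal{F}$ of clopens in $T$ contained in $f^{-1}((a,\infty))$ is chosen so that the sets $\overline{A}$ cover the compact $\beta T$-closure $K$ of $f^{-1}([b,\infty))$, and compactness of $K$ extracts a \emph{finite} $\mathcal{F}_0\subset\mathcal{F}$. Then $C=\bigcup\mathcal{F}_0$ is a finite union of clopens, hence clopen, with the required inclusions. Compactness in $\beta T$ is doing the work that your partition would have done.
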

\begin{proof}
Let $\beta T$ be the Czech-Stone compactification of $T$. Since $T$ is completely regular, we have that $T$ is embedded densely in $\beta T$. Denote by $\tilde{f}\colon \beta T\to \R$ the unique continuous extension of 
$f$. Let $K$ be the closure of $ f^{-1}([b,\infty))$ in $\beta T$. By using the fact that $T$ is $0$-dimensional fix a set $\mathcal{F}$ of clopen subsets of $T$ such that 
$f^{-1}([\frac{a+b}{2},\infty))\subset \bigcup \mathcal{F}\subset f^{-1}((a,\infty))$. By studying $\tilde{f}$ we observe that $\overline{T\setminus \bigcup\mathcal{F}}\cap K=\emptyset$. 
Thus $K\subset \overline{\bigcup\mathcal{F}}$. 

Given $A\in \mathcal{F}$, the characteristic function $1_{A}\colon T\to \{0,1\}$ is continuous and admits a continous extension $\beta T\to \R$. In fact the image of this extension 
is still $\{0,1\}$, since $T$ is dense in $\beta T$. Thus $\overline{A}$ is clopen in $\beta T$.  

Consequently, the set $\{\overline{A}\subset \beta T:\ A\in \mathcal{F}\}$ is an open cover of $K$. Thus the compactness $K$ yields that there exists a finite set $\mathcal{F}_{0}\subset\mathcal{F}$
such that $\{\overline{A}\subset \beta T:\ A\in \mathcal{F}_{0}\}$ is an open cover of $K$. Since the sets $A\in \mathcal{F}_{0}$ are already closed in $T$, we have that 
$f^{-1}([b,\infty))\subset \bigcup \mathcal{F}_{0}$. Now $\bigcup\mathcal{F}_{0}$ is clopen in $T$ as a finite union of clopen sets. 
Note that $\bigcup\mathcal{F}_{0}\subset f^{-1}((a,\infty))$ by the construction of $\mathcal{F}$.
\end{proof}

\begin{proof}[Proof of Theorem \ref{Sorgenfrey}]
Fix $f_{0},g_{0}\in \S_{CB_{0}(S^{n},\mathcal{I})}$ and $\epsilon>0$. We may assume without loss of generality, possibly by multiplying $f_{0}$ with $-1$, that $\sup_{t\in S^{n}}f_{0}(t)=1$. Then one can find intervals $[a_{i},b_{i}), [c_{i},d_{d})\subset S,\ 1\leq i\leq n,$ such that $|g_{0}(t)|<\epsilon$ for $t\in S^{n}\setminus \prod_{i}[a_{i},b_{i})$
and $f_{0}(s)>1-\epsilon$ for $s\in \prod_{i}[c_{i},d_{i})$. 

Note that $\prod_{i}[a_{i},b_{i})\subset S^{n}$ is a clopen subset and thus $\1_{\prod_{i}[a_{i},b_{i})}\in  \S_{CB_{0}(S^{n},\mathcal{I})}$. Let $h_{i}\colon S\rightarrow S$ be homeomorphisms such that 
$h_{i}([a_{i},b_{i}))=[c_{i},d_{i})$ for $1\leq i\leq n$. Putting $h=\prod_{i=1}^{n}h_{i}$ defines a homeomorphism $S^{n}\to S^{n}$. Since $\prod_{i}[a_{i},b_{i})$ is a clopen set, we may define 
rotations $T_{1},T_{2}\in \mathcal{G}_{CB_{0}(S^{n},\mathcal{I})}$ by defining $T_{1}(f)=f\circ h$ and $T_{2}(f)(t)=(1_{\prod_{i}[a_{i},b_{i})}(t)-1_{S^{n}\setminus \prod_{i}[a_{i},b_{i})}(t))(f\circ h)(t)$ for 
$f\in CB_{0}(S^{n},\mathcal{I})$ and $t\in S^{n}$. Now $\|1_{\prod_{i}[a_{i},b_{i})}-\frac{T_{1}(f_{0})+T_{2}(f_{0})}{2}\|<\epsilon$. Since $\epsilon$ was arbitrary, we conclude that 
$1_{\prod_{i}[a_{i},b_{i})}\in \overline{\conv}\mathcal{G}_{CB_{0}(S^{n},\mathcal{I})}(f)$.

Finally, we will apply Lemma \ref{lm: clopen} as follows. Note that if $C\subset S^n$ is any clopen set, then 
the mapping $S^n \to \{-1,1\},\ t\mapsto 1_{S^n \setminus C}-1_{C}$ is continuous. 

By combining the proofs of Lemma \ref{lm: clopen} and \cite[Lemma 2.3]{RT} recursively
one can find for each $k\in \N$ a partition of $S^n$ by disjoint clopen sets $\{C_{i}:\ 0\leq i\leq 2k-1\}$ 
such that $g_{0}(C_{i})\subset [-1+(2i-1)/2k,-1+(2i+3)/2k]$ for $0\leq i\leq 2k-1$. Similarly as in 
the proof of \cite[Thm. 2.4]{RT} we put $B_{i}=\bigcup_{j\geq i}C_{j}$ and
define $R_{i}\in \mathcal{G}_{CB_{0}(S^n , \mathcal{I})}$ by $R_{i}(f)(t)=(1_{B_{i}}(t)-1_{B_{i}^{c}}(t))f(t)$ for $0\leq i\leq 2k-1,\ 
t\in S^n$. Then 
\[\left\|g_{0}-\frac{1}{2k}\sum_{i=0}^{2k-1}R_{i} 1_{\prod_{i}[a_{i},b_{i})}\right\|\leq \frac{1}{2k}.\]
Thus we have the claim according to Fact \ref{Fact1}.
\end{proof}

\begin{remark}
Similar arguments as employed in the proof of Theorem \ref{Sorgenfrey} yield that $C(K)$ is convex-transitive for Alexandroff's double-arrow space $K$. 
\end{remark}

Recall that the double-arrow space $K=[0,1]\times \{0,1\}$ is a closed subspace of $[0,1]^{2}$ in the order topology,
where the order is the lexicographic order. Then $K$ is non-metrizable, separable, compact and a typical counterexample space in topology. This example could be of interest because the corresponding $C(K)$ space has some peculiar properties as a Banach space. Namely, it has been previously studied extensively in connection with some branches of functional analysis other than the isometric theory, for example, in the study of non-separable Banach spaces and approximation theory, see e.g. \cite{Zizler}, \cite{Vlasov}. The fact that $C(K)$ is convex-transitive can also be recovered from \cite{Ca5} and \cite{RT}.

\subsection{Functions on infinite trees}\label{sect: tree}

Here we consider infinite trees $T=\kappa^{<\lambda}\cup \kappa^\lambda$ where $\kappa$ is a cardinal and $\lambda$ is 
a limit ordinal. This has a natural partial order given by $x\leq y$ if $x$ is an initial segment of $y$. 
We endow $T$ with the order topology. Note that $T$ may be non-compact.
We will study spaces $CB(T)$ and $CB_{0}(T,\mathcal{I})$ where $\mathcal{I}$ is an ideal on $T$ given by $\mathcal{I}=\bigcup_{\alpha<\lambda}\mathcal{P}(\kappa^{<\alpha})$.

For example, in the case where $\kappa=2$ and $\lambda=\omega$, $\kappa^\lambda =2^{\omega}$ is homeomorphic to the Cantor set $\Delta$. 
The relevance to this discussion is that $C(\Delta)=C(T)\mod CB_{0}(T,\mathcal{I})$ isometrically,
and that $C(\Delta)$ is a classical example of a convex-transitive space (\cite{Rol}).

We will obtain a method for producing other concrete examples of convex-transitive spaces, namely, 
$C(k^{\omega})$ is convex-transitive for $k<\omega$. This is obtained by modifying the proof of Theorem \ref{thm: trees}.
The similar statement holds for continuous bounded functions on the Baire space $\omega^{\omega}$, as the following result shows.
\begin{theorem}\label{thm: trees}
Let $\kappa$ be an infinite cardinal and $\lambda$ be a limit ordinal such that $|\kappa^{<\lambda}|=|\kappa|$.
Suppose that $T=\kappa^{<\lambda}\cup \kappa^{\lambda}$ is topologized similarly as above and let $\mathcal{I}=\bigcup_{\alpha<\lambda}\mathcal{P}(\kappa^{\alpha})$.
Then the space
\[CB(\kappa^{\lambda})=CB(T)\mod CB_{0}(T,\mathcal{I})\]
is uniformly convex-transitive.
\end{theorem}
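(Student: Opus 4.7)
The plan is to follow the structure of the proof of Theorem \ref{Sorgenfrey} closely, working with the identification $CB(T)/CB_0(T,\mathcal{I})\simeq CB(\kappa^\lambda)$ asserted in the statement. The space $\kappa^\lambda$ is Hausdorff and zero-dimensional with clopen basis given by the cones $C_x^\lambda = \{b\in \kappa^\lambda : x \leq b\}$, $x \in \kappa^{<\lambda}$, so Lemma \ref{lm: clopen} applies. I will use two families of isometries in $\mathcal{G}_{CB(\kappa^\lambda)}$: the sign-flip multiplications $f \mapsto (\1_D - \1_{D^c})f$ for clopen $D \subset \kappa^\lambda$, and the pullbacks $f \mapsto f \circ \phi$ under homeomorphisms $\phi$ of $\kappa^\lambda$ coming from level-preserving tree automorphisms of $\kappa^{<\lambda}$. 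The cardinality hypothesis $|\kappa^{<\lambda}|=|\kappa|$ guarantees that this tree-automorphism group is rich enough for the combinatorial arguments that follow.

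Fix $f_0, g_0 \in \S_{CB(\kappa^\lambda)}$ and $\epsilon > 0$. The first stage is a verbatim adaptation of the first half of the proof of Theorem \ref{Sorgenfrey}, with basic cones replacing the boxes $\prod_i [a_i,b_i)$. After negating $f_0$ if necessary, assume $\sup f_0 = 1$ and locate a basic cone $C_{x_2}^\lambda$ with $f_0 \geq 1 - \epsilon$ on it. Picking any non-empty basic cone $C_{x_1}^\lambda$ with $\mathrm{level}(x_1)\geq\mathrm{level}(x_2)$, I produce a tree automorphism $h$ sending $x_1$ into $C_{x_2}^\lambda$ at level $\mathrm{level}(x_1)$ (a single sibling transposition at a common ancestor suffices), and form $T_1 f = f \circ h$ and $T_2 f = (\1_{C_{x_1}^\lambda} - \1_{(C_{x_1}^\lambda)^c})(f \circ h)$ in $\mathcal{G}$. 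Then $\tfrac{1}{2}(T_1 f_0 + T_2 f_0) = \1_{C_{x_1}^\lambda}\cdot(f_0 \circ h)$ is within $\epsilon$ of $\1_{C_{x_1}^\lambda}$ in sup norm, so $\1_{C_{x_1}^\lambda} \in \overline{\conv}\mathcal{G}(f_0)$.

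The second stage, genuinely new beyond the Sorgenfrey setting, produces the constant function $\1$ in $\overline{\conv}\mathcal{G}(\1_{C_{x_1}^\lambda})$ with quantitative error $O(1/N)$. Given $N$, partition $\kappa^\lambda = F_1 \sqcup \dots \sqcup F_N$ into non-empty clopen pieces, and set $E_i = \kappa^\lambda\setminus F_i$. Since $|\kappa^{<\lambda}|=|\kappa|$ and $\lambda$ is limit, every non-empty clopen subset of $\kappa^\lambda$ is a disjoint union of at most $|\kappa|$ basic cones (each cone $\cong \kappa^\lambda$), and such unions are themselves homeomorphic to $\kappa^\lambda$; in particular $C_{x_1}^\lambda \cong E_i$ and $(C_{x_1}^\lambda)^c \cong F_i$, and gluing these two homeomorphisms produces a homeomorphism of $\kappa^\lambda$ that sends $C_{x_1}^\lambda$ onto $E_i$. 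This places $\1_{E_i}$ in $\mathcal{G}(\1_{C_{x_1}^\lambda})$ for each $i$. Since each point of $\kappa^\lambda$ lies in exactly $N-1$ of the $E_i$, $\frac{1}{N}\sum_{i=1}^N \1_{E_i} = \frac{N-1}{N}\1$, so $\|\1-\frac{1}{N}\sum_i \1_{E_i}\|\leq 1/N$.

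The third stage copies the second half of the proof of Theorem \ref{Sorgenfrey}: apply Lemma \ref{lm: clopen} and a recursive argument to obtain a clopen partition $\{D_j\}_{j=0}^{2k-1}$ of $\kappa^\lambda$ with $g_0(D_j)$ in an interval of width $O(1/k)$; define $B_i = \bigcup_{j\geq i}D_j$ and $R_i = (\1_{B_i}-\1_{B_i^c})\cdot\I \in \mathcal{G}$; verify $\|g_0 - \frac{1}{2k}\sum_i R_i(\1)\| = O(1/k)$. Chaining the three stages via Fact \ref{Fact1} yields the uniform bound $K(\epsilon) = O(1/\epsilon)$. The main obstacle is the second stage, which has no counterpart in the Sorgenfrey proof: there the ideal $\mathcal{I}$ forces every quotient representative to have approximately-bounded support, so a single cone indicator $\1_A$ is an adequate substitute for $\1$; here the ideal has been fully quotiented out, $g_0$ may have full support on $\kappa^\lambda$, and one must exploit the topological rigidity $\kappa^\lambda \cong E$ for every non-empty clopen $E\subset\kappa^\lambda$ to bootstrap $\1_{C_{x_1}^\lambda}$ up to the genuine identity $\1$.
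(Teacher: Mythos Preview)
Your argument is correct and arrives at the same uniform bound $K(\epsilon)=O(1/\epsilon)$, but the crucial middle passage to the constant function is organized differently from the paper. The paper works upstairs in $CB(T)$ and builds, for each $i<\omega$, an order-preserving homeomorphism $\phi_i\colon T\setminus\{i\}\to T\setminus\kappa^{<\alpha+1}$ (this is precisely where $|\kappa^{<\lambda}|=|\kappa|$ is spent) so that for any $r$ all but one of the $\phi_i(r)$ fall above the node $s$ where $f>1-\epsilon$; a single averaging then yields the class of $\1$ directly from $\tilde f$. You instead split this into two moves: first produce a cone indicator $\1_{C_{x_1}}$ with two isometries exactly as in the Sorgenfrey argument, and then exploit the topological homogeneity of $\kappa^\lambda$ (every nonempty clopen is a disjoint union of at most $|\kappa|$ basic cones, each cone $\cong\kappa^\lambda$, hence every nonempty clopen $\cong\kappa^\lambda$) to carry $\1_{C_{x_1}}$ onto the $\1_{E_i}$ and average. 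Your route is cleaner in that it isolates a reusable topological fact, while the paper's is more hands-on but avoids the detour through a fixed cone indicator.

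Two small expository points. The homeomorphisms you glue in Stage~2 are not level-preserving tree automorphisms, so your opening description of the isometry toolkit is too narrow; but this is harmless, since pullback along \emph{any} self-homeomorphism of $\kappa^\lambda$ is an isometry of $CB(\kappa^\lambda)$. And because $x_1$ is chosen with $\mathrm{level}(x_1)\ge\mathrm{level}(x_2)$ while $x_2$ depends on $\epsilon$, the sentence ``$\1_{C_{x_1}^\lambda}\in\overline{\conv}\,\mathcal G(f_0)$'' is not literally right for a fixed $x_1$; however, the chaining via Fact~\ref{Fact1} only needs the $\epsilon$-close estimate at each stage, which you do have.
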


\begin{proof}[Proof of Theorem \ref{thm: trees}]
Let $\tilde{f},\tilde{g}\in CB(T)\mod CB_{0}(T,\mathcal{I}),\ \|g\|\leq \|f\|=1$. 
Fix representatives $f,g\in CB(T)$, respectively. Without loss of generality we may assume that 
$\sup_{t\in \kappa^{\lambda}} f(t)=1$. Given $\epsilon>0$ there exists according to the definition of the order topology of $T$ an ordinal $\alpha<\lambda$ and an element $s\in \kappa^{\alpha}$ such that $f(t)>1-\epsilon$ for each extension $t\geq s$.

Let $\phi_{1}\colon T\setminus \{1\} \to T\setminus \kappa^{<\alpha+1}$ be an order preserving 
homeomorphism such that $\phi_{1}(\{(1,k):\ k\in \kappa\})=\{t\in \kappa^{\alpha+1}:\ s\not\leq t\}$ and 
$\phi_{1}(\kappa\setminus \{1\})=\{t\in \kappa^{\alpha+1}:\ s\leq t\}$. Indeed, here we apply the assumption that 
$|\kappa^{<\lambda}|=|\kappa|$. Similarly, we define homeomorphisms
$\phi_{i}\colon T\setminus \{i\} \to T\setminus \kappa^{<\alpha+1}$ for $i<\omega$ 
such that $\phi_{i}(\{(i,k):\ k\in \kappa\})=\{t\in \kappa^{\alpha+1}:\ s\not\leq t\}$ and 
$\phi_{i}(\kappa\setminus \{i\})=\{t\in \kappa^{\alpha+1}:\ s\leq t\}$. 

It is easy to verify that $x+ CB_{0}(T,\mathcal{I}) \mapsto x\circ \phi_{i} + CB_{0}(T,\mathcal{I}),\ x\in CB(T),$ 
defines an isometric automorphism on $CB(T)\mod CB_{0}(T,\mathcal{I})$ for each $i$. Note that
\begin{equation}\label{eq: phiit} 
\frac{1}{m} \sum_{i=1}^{m}f\circ \phi_{i}(r)\geq 1-\epsilon - 2/m\quad \mathrm{for}\ r\in T\setminus \kappa,\ m<\omega.
\end{equation}
Since the selection of $\epsilon$ was arbitrary, we conclude that 
\[1_{T\setminus \kappa}+CB_{0}(T,\mathcal{I})\in \overline{\conv}(\mathcal{G}_{CB(T)\mod CB_{0}(T,\mathcal{I})}(\tilde{f})).\]
Finally, recalling Fact \ref{Fact1}, it suffices to check that 
\begin{equation}\label{eq: g1CB}
g\in \overline{\conv}(\{T(1_{CB(T)}):\ T\in \mathcal{G}_{CB(T)},\ T(CB_{0}(T,\mathcal{I}))=CB_{0}(T,\mathcal{I})\}).
\end{equation}
By studying the behaviour of $g$ at the limit nodes we observe that for each $\epsilon>0$ the tree $T$ can be decomposed into a family of compact clopen subsets $\{C_{\gamma}\}_{\gamma}$ such that 
$\max_{t\in C_{\gamma}}g(t)-\min_{t\in C_{\gamma}}g(t)<\epsilon$ for each $\gamma$. The claim \eqref{eq: g1CB} is accomplished by applying an average of continuous changes of signs similarly as in Theorem \ref{Sorgenfrey}.
\end{proof}

\section{Large banach algebras, ideals and convex-transitivity} 

Perhaps 'Sweeping problems under the rug' would have served as a more intuitive title for this section, as we wish to
cover some undesirable parts of the space in an orderly fashion. Namely, our aim is to fade dissymmetries of the space by taking a quotient with respect to a suitable ideal. The ideal must be chosen carefully in such a way that, on one hand, it covers the sources of dissymetry that one wishes to get rid of, and, on the other hand, dividing out the ideal does not spoil the existing symmetries.

\subsection{Banach ideals from ideals on sets.}\label{sect: c_0ideal}

\begin{theorem}\label{thm: convex_ideal}
Let $\X$ be a uniformly convex-transitive Banach space and let $\kappa$ be an infinite regular cardinal. 
Let $\mathcal{I}$ be an ideal on $\kappa$ such that $\mathcal{I}\subsetneq \mathcal{P}(\kappa)$ and each subset $A\subset \kappa$ with $|A|<\kappa$ is included in $\mathcal{I}$. Let us assume that $\mathcal{I}$ has the following 
homogeneity properties: For each $A,B\subset \kappa,\ A,B\notin \mathcal{I},$ and $\epsilon>0$ there are
$\phi_{i}\colon \kappa\to \kappa,\ 1\leq i\leq n,$ such that 
\begin{enumerate}
\item[(i)]{$\phi_{i}^{-1}(I)\in \mathcal{I}$ if and only if $I\in \mathcal{I}$ for $1\leq i\leq n$.} 
\item[(ii)]{$\frac{1}{n}\sum_{i=1}^{n}1_{\phi_{i}(B)}(x)>1-\epsilon\ \mathrm{for}\ x\in A$.}
\end{enumerate}
Then the Banach space
\[\ell^{\infty}(\kappa,\X)\ \mod\ \{(x_{\alpha})_{\alpha<\kappa}\in \ell^{\infty}(\kappa,\X):\ \lim_{\alpha,\mathcal{I}}\|x_{\alpha}\|= 0\}\] 
is uniformly convex-transitive.
\end{theorem}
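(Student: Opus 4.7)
The plan is to prove uniform convex-transitivity of the quotient via a three-stage reduction passing through a ``constant'' intermediary $\widetilde{x_{0}\cdot\1_{\kappa}}$ for a suitably chosen $x_{0}\in\S_{\X}$. Fix $\tilde f,\tilde g$ of unit norm in the quotient and $\epsilon>0$. Since the quotient norm is the essential supremum $\esssup_{\alpha,\mathcal{I}}\|\cdot_{\alpha}\|$ (an invariant of the coset), one may pick representatives $f,g$ with $\|f\|_{\infty},\|g\|_{\infty}\le 1+\epsilon$ and $\esssup_{\alpha,\mathcal{I}}\|f_{\alpha}\|=\esssup_{\alpha,\mathcal{I}}\|g_{\alpha}\|=1$. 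Let $A:=\{\alpha\in\kappa:\|f_{\alpha}\|>1-\epsilon\}$, which lies outside $\mathcal{I}$; fix $\alpha_{0}\in A$ and set $x_{0}:=f_{\alpha_{0}}/\|f_{\alpha_{0}}\|\in\S_{\X}$. The aim is to produce three approximations $\tilde f\rightsquigarrow\widetilde{x_{0}\cdot\1_{A}}\rightsquigarrow\widetilde{x_{0}\cdot\1_{\kappa}}\rightsquigarrow\tilde g$, each of length depending only on $\epsilon$, and chain them via Fact~\ref{Fact1}.

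Step A (pointwise regularisation of $\tilde f$). For each $\alpha\in A$, apply uniform convex-transitivity of $\X$ with constant $m=K(\epsilon)$ to find $U_{k}^{(\alpha)}\in\mathcal{G}_{\X}$ and weights $c_{k}^{(\alpha)}$ with $\bigl\|\sum_{k}c_{k}^{(\alpha)}U_{k}^{(\alpha)}(f_{\alpha}/\|f_{\alpha}\|)-x_{0}\bigr\|<\epsilon$. Round each $c_{k}^{(\alpha)}$ to rationals with a common denominator $N=N(\epsilon)$, chosen so that the total rounding error is $O(\epsilon)$, and rewrite the combination as an equal-weight average $\frac{1}{N}\sum_{j=1}^{N}W_{j}^{(\alpha)}(f_{\alpha})$ still $O(\epsilon)$-close to $x_{0}$. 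For $\alpha\notin A$ take $W_{j}^{(\alpha)}=(-1)^{j}\I$ with $N$ even, so the average vanishes pointwise. The coordinatewise operators $\mathcal{W}_{j}(x)_{\alpha}:=W_{j}^{(\alpha)}(x_{\alpha})$ are isometric automorphisms of $\ell^{\infty}(\kappa,\X)$ and preserve the kernel (since $\|\mathcal{W}_{j}(x)_{\alpha}\|=\|x_{\alpha}\|$), hence descend to isometric automorphisms of the quotient. The average $\frac{1}{N}\sum_{j}\mathcal{W}_{j}(\tilde f)$ is within $O(\epsilon)$ of $\widetilde{x_{0}\cdot\1_{A}}$ in quotient norm.

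Step B (spreading via the $\phi_{i}$) and Step C (reconstruction of $\tilde g$). For B, apply hypothesis (ii) to the pair $(\kappa,A)$ --- both outside $\mathcal{I}$ since $\mathcal{I}$ is proper --- to obtain bijections $\phi_{1},\ldots,\phi_{n}\colon\kappa\to\kappa$ with $\frac{1}{n}\sum_{i}\1_{\phi_{i}(A)}(\beta)>1-\epsilon$ for every $\beta\in\kappa$. Hypothesis (i) then makes the substitution operators $T_{i}(x)_{\beta}:=x_{\phi_{i}^{-1}(\beta)}$ into isometric automorphisms of the quotient, and $\frac{1}{n}\sum_{i}T_{i}(\widetilde{x_{0}\cdot\1_{A}})=x_{0}\cdot\frac{1}{n}\sum_{i}\1_{\phi_{i}(A)}$ is within $\epsilon$ of $\widetilde{x_{0}\cdot\1_{\kappa}}$ pointwise. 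For C, for each $\alpha$ apply uniform convex-transitivity of $\X$ to $g_{\alpha}\in\B_{\X}$: handling $\|g_{\alpha}\|<1$ via $0=\tfrac{1}{2}(x_{0}+(-x_{0}))$, which costs $+2$ in length, one obtains an equal-weight average $\frac{1}{N'}\sum_{k}V_{k}^{(\alpha)}(x_{0})$ of length $N'=N'(\epsilon)$ within $\epsilon$ of $g_{\alpha}$. Assembling the $V_{k}^{(\alpha)}$ into coordinatewise isometries as in Step A and averaging gives an $\epsilon$-approximation of $\tilde g$ from $\widetilde{x_{0}\cdot\1_{\kappa}}$.

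The principal technical obstacle is to verify that each substitution operator $T_{i}$ in Step B is an honest isometric \emph{automorphism} of the quotient, not merely an isometric embedding: hypothesis (i) together with injective $\phi_{i}$ yields isometry, but surjectivity modulo the kernel requires each $\phi_{i}$ to be bijective, or at least bijective modulo an $\mathcal{I}$-small set. I would read bijectivity as implicit in the intended meaning of hypothesis~(ii), consistent with natural examples such as $\kappa=\omega$ and $\mathcal{I}=\{\textrm{finite sets}\}$, where the $\phi_{i}$ are genuine permutations. A secondary bookkeeping point is that the length $n$ furnished by~(ii) must be uniform in the pair $(A,B)$ for each fixed $\epsilon$; absent this uniformity the argument delivers only plain convex-transitivity rather than the uniform variant. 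Granted both, Fact~\ref{Fact1} chains the three stages into $\dist(\tilde g,C_{N+n+N'}(\tilde f))=O(\epsilon)$, with $N$, $n$, $N'$ all depending only on $\epsilon$.
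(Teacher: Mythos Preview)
Your three-stage reduction is essentially the paper's approach: the paper also passes through scalar-type intermediaries $(\|x_{\alpha}\|z_{0})_{\alpha}$ and $(\|y_{\alpha}\|z_{0})_{\alpha}$, handles the outer two steps by coordinatewise rotations from $\prod_{\alpha}\mathcal{G}_{\X}$ (your Steps~A and~C), and uses the maps $\phi_{i}$ for the middle step (your Step~B).

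The one substantive divergence is precisely where you flag a concern. You define your substitution operator via $\phi_{i}^{-1}$, which forces you to \emph{assume} bijectivity of the $\phi_{i}$. The paper instead sets $\Phi\colon (x_{\alpha})+N_{\mathcal{I}}\mapsto (x_{\phi(\alpha)})+N_{\mathcal{I}}$, which is well defined for arbitrary $\phi$, and then \emph{derives} from condition~(i) alone that $\Phi$ is an isometric automorphism of the quotient. Contractivity uses $\phi^{-1}(I)\in\mathcal{I}$ for $I\in\mathcal{I}$; isometry uses the converse; for surjectivity the paper picks (via Hausdorff maximality) a maximal $\Gamma\subset\kappa$ on which $\phi$ is injective, notes $\phi(\Gamma)=\phi(\kappa)$, and argues that both $J=\kappa\setminus\phi(\kappa)$ and $I=\kappa\setminus\Gamma$ lie in $\mathcal{I}$, so that $\phi$ is a bijection modulo $\mathcal{I}$-sets. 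Thus bijectivity-mod-$\mathcal{I}$ is treated as a consequence of~(i), not an additional hypothesis read into~(ii); you should engage with that derivation rather than assume it away. (That said, the step $\kappa\setminus\Gamma\in\mathcal{I}$ deserves scrutiny: condition~(i) controls preimages, and the passage to images is not entirely transparent---e.g.\ a two-to-one $\phi$ on $\omega$ with $\mathcal{I}$ the finite sets satisfies~(i) but admits no such $\Gamma$.)

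Your second concern---that the $n$ furnished by~(ii) must be uniform in $(A,B)$ for the conclusion to be \emph{uniform} convex-transitivity---is well taken and is not made explicit in the paper either. In the motivating examples (small-set ideal, dual of a suitable ultrafilter) one can take $n=1$ independently of $A,B$, which is presumably the intended reading.
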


Let us make a few comments before the proof. Recall that $\ell^{\infty}$ itself is far from being convex-transitive in view of the characterization of its onto isometries (see \cite[2.f.14]{LTI}) as we discussed in the introduction. As regards the title of this paper, we note that if $\X$ should be additionally a Banach algebra, then $\ell^{\infty}(\kappa,\X)$ becomes, with the pointwise multiplication, a Banach algebra and $\{(x_{\alpha})_{\alpha}\in \ell^{\infty}(\kappa,\X):\ \lim_{\alpha,\mathcal{I}}\|x_{\alpha}\|= 0\}$ is its (closed, $2$-sided) ideal. We note that the technical homogeneity property on $\mathcal{I}$ is motivated by considerations in \cite{RT}. For example, the condition holds if $\mathcal{I}$ is the ideal of sets of cardinality $<\kappa$, or, even for $n=1$, if $\mathcal{I}$ is the dual ideal of an ultrafilter, which does not contain any sets of cardinality $<\kappa$ (see the proof of Lemma 3.2 in \cite{Talp}). If $\kappa=\omega$, then the above result contains the result appearing in \cite{RT}, namely  that $\ell^{\infty}(\X)\mod c_{0}(\X)$ is convex-transitive for uniformly convex-transitive $\X$.

\begin{proof}[Proof of Theorem \ref{thm: convex_ideal}]
Denote
\[N_{\mathcal{I}}=\{(x_{\alpha})_{\alpha<\kappa}\in \ell^{\infty}(\kappa,\X):\ \lim_{\alpha,\mathcal{I}}\|x_{\alpha}\|= 0\}.\] 
The fact that this is closed follows by similar argument as in the case $BC_{0}(T,\mathcal{I})$.
Note that the norm on $\ell^{\infty}(\kappa,\X)\ \mod\ N_{\mathcal{I}}$ is given by 
$\|\hat{x}\|=\inf_{I\in \mathcal{I}}\|1_{\kappa\setminus I}(\alpha)(x_{\alpha})\|$ for $\hat{x}=(x_{\alpha})+N_{\mathcal{I}}$.

Let $\phi\colon \kappa\to \kappa$ be a mapping satisfying (i). We wish to verify that 
\begin{equation}\label{eq: Iform}
\Phi\colon (x_{\alpha})_{\alpha<\kappa}+N_{\mathcal{I}}\mapsto (x_{\phi(\alpha)})_{\alpha<\kappa}+N_{\mathcal{I}}
\end{equation}
defines an isometric automorphism on $\ell^{\infty}(\kappa,\X)\ \mod\ N_{\mathcal{I}}$. Clearly $\Phi$ is linear.
Since $\phi^{-1}(I)\in \mathcal{I}$ for $I\in \mathcal{I}$, we obtain that $\Phi$ is contractive, and  
it follows from assumption (i) that $\Phi$ is really an isometry. To show that $\Phi$ is onto, we will 
find $I,J\in \mathcal{I}$ such that $\phi|_{\kappa\setminus I}$ is a bijection $\kappa\setminus I\to \kappa\setminus J$. 
One can deduce from (i) that 
\begin{equation}\label{eq: dualfilter}
\kappa\setminus \phi(\kappa\setminus I)\in \mathcal{I}\ \mathrm{if\ and\ only\ if}\ I\in \mathcal{I}.
\end{equation}
By the Hausdorff maximality principle let $\{C_{\beta}\}_{\beta}$ be a maximal increasing chain of 
subsets of $\kappa$ such that $\phi|_{C_{\beta}}$ is injective for each $\beta$. Then, by putting 
$\Gamma=\bigcup_{\beta}C_{\beta}$, we have that $\phi|_{\Gamma}$ is injective and $\phi(\Gamma)=\phi(\kappa)$. 
Since $\emptyset\in \mathcal{I}$, we obtain by \eqref{eq: dualfilter} that 
$J=\kappa\setminus \phi(\Gamma)=\kappa\setminus \phi(\kappa)\in \mathcal{I}$ and by (i) that
$I=\kappa\setminus \Gamma\in \mathcal{I}$. 

Let $\hat{x},\hat{y}\in \S_{\ell^{\infty}(\kappa,\X)\ \mod\ N_{\mathcal{I}}}$. We wish to check that 
$\hat{x}\in \overline{\conv}(\{RT\hat{y}:\ RT\})\subset \ell^{\infty}(\kappa,\X)\ \mod\ N_{\mathcal{I}}$, where $T$ ranges over isometries of the form \eqref{eq: Iform} and $R$ ranges in $\prod_{\alpha<\kappa}\mathcal{G}_{\X}$.
Towards this, let $(x_{\alpha})_{\alpha<\kappa},(y_{\alpha})_{\alpha<\kappa}\in \ell^{\infty}(\kappa,\X)$ be the corresponding representatives. Fix $z_{0}\in \S_{\X}$. Since $\X$ is uniformly convex-transitive, we obtain that 
\[(x_{\alpha})_{\alpha<\kappa}\in \overline{\conv}(\{R [(\|x_{\alpha}\|z_{0})_{\alpha<\kappa}]:\ R\})\subset \ell^{\infty}(\kappa,\X)\]
and 
\[(\|y_{\alpha}\|z_{0})_{\alpha<\kappa}\in \overline{\conv}(\{S [(y_{\alpha})_{\alpha<\kappa}]:\ S\})\subset \ell^{\infty}(\kappa,\X)\]
where $R$ and $S$ range in $\prod_{\alpha<\kappa}\mathcal{G}_{\X}$. It is easy to see that this type of isometries on $\ell^{\infty}(\kappa,\X)$ define in a natural way also isometries on $\ell^{\infty}(\kappa,\X)\ \mod\ N_{\mathcal{I}}$.

Thus, it suffices to verify that 
\[(\|x_{\alpha}\|z_{0})_{\alpha<\kappa}+N_{\mathcal{I}}\in \overline{\conv}(\{T [(\|y_{\alpha}\|z_{0})_{\alpha<\kappa}+N_{\mathcal{I}})]:\ T\}) \subset \ell^{\infty}(\kappa,\X)\ \mod\ N_{\mathcal{I}},\] 
where $T$ ranges over isometries of the type \eqref{eq: Iform}. Let $\epsilon^{\prime}>0$. 
Put 
$A=\{\alpha\in\kappa:\ \|y_{\alpha}\|>1-\epsilon^{\prime}\}$ and 
$B=\{\alpha\in \kappa:\ \|x_{\alpha}\|>\epsilon^{\prime}\}$. Note that $A,B\notin \mathcal{I}$ by the selection of $x$ and $y$.

It follows from assumption (ii) by using
isometries of the form \eqref{eq: Iform} that there is $(c_{\alpha})\in \ell^{\infty}(\kappa)$ such that 
$(c_{\alpha}z_{0}) + N_{\mathcal{I}}\in \overline{\conv}(\{T [(\|y_{\alpha}\|z_{0})_{\alpha<\kappa}+N_{\mathcal{I}})]:\ T\}) \subset \ell^{\infty}(\kappa,\X)\ \mod\ N_{\mathcal{I}}$, where $c_{\alpha}\in [1-\epsilon^{\prime},1]$ for $\alpha\in B$. 
Since $\epsilon^{\prime}$ was arbitrary, we obtain by applying suitable convex combinations
with changes of signs, that 
\[(1_{B}(\alpha)z_{0})_{\alpha<\kappa}+N_{\mathcal{I}}\in \overline{\conv}(\{T [(\|y_{\alpha}\|z_{0})_{\alpha<\kappa}+N_{\mathcal{I}})]:\ T\}) \subset \ell^{\infty}(\kappa,\X)\ \mod\ N_{\mathcal{I}},\]   
and further that
\[(\|x_{\alpha}\|z_{0})_{\alpha<\kappa}+N_{\mathcal{I}}\in \overline{\conv}(\{T [(\|y_{\alpha}\|z_{0})_{\alpha<\kappa}+N_{\mathcal{I}})]:\ T\}) \subset \ell^{\infty}(\kappa,\X)\ \mod\ N_{\mathcal{I}}.\] 
\end{proof}

The following example is an application of the above result. Let $\X$ be the Banach algebra of \emph{all} bounded functions $f\colon [0,1]\to \R$ with pointwise operations and the $\sup$ norm. 
Let $\mathcal{I}\subset \X$ be the ideal consisting of functions, whose support has cardinality strictly less than the continuum. Assuming CH would translate to the statement that their support is countable or finite, but this is not essential here.
Now, Theorem \ref{thm: convex_ideal} yields that $\X\mod \mathcal{I}$ is uniformly convex-transitive (regardless of the verity of CH). If one considers $\Y$ to be the Banach algebra of measurable functions $g\colon [0,1]\to \R$ instead, again with the $\sup$ norm, and $\mathcal{J}$ is the ideal of the functions, whose support has Lebesgue measure $0$, then $\Y\mod\mathcal{J}=L^{\infty}$ isometrically, a classical example of a convex-transitive space (\cite{Rol}).

Let us denote 
\[\ell^{\infty}_{\sigma}(\kappa,\X)=\{\{x_\alpha\}_{\alpha<\kappa}\in \ell^{\infty}(\kappa,\X):\ |\mathrm{supp}(x)|\leq \sigma\}\subset \ell^{\infty}(\kappa,\X),\]
where $\kappa,\ \sigma$ are infinite cardinals, and $\sigma\leq \kappa$.

Let $\kappa$ and $\lambda$ be cardinals of uncountable cofinality. Next, we will study the Banach algebra $\ell^{\infty}(\kappa^{+},\ell^{\infty}(\lambda^{+}))$, where the algebra operation is the natural pointwise multiplication.
We denote by
\[\mathcal{I}=\ell^{\infty}_{\kappa}(\kappa^{+},\ell^{\infty}(\lambda^{+}))+\ell^{\infty}(\kappa^{+},\ell^{\infty}_{\lambda}(\lambda^{+}))\subset \ell^{\infty}(\kappa^{+},\ell^{\infty}(\lambda^{+}))\]
depending on the fixed $\kappa,\lambda$, and we note that this is an ideal of the Banach algebra $\ell^{\infty}(\kappa^{+},\ell^{\infty}(\lambda^{+}))$. The fact that $\mathcal{I}$ is closed follows from 
the uncountable cofinality of $\kappa$ and $\lambda$.
By modifying the proof of Theorem \ref{thm: convex_ideal} we obtain the following result.
\begin{theorem}
Let $\kappa,\lambda$ and $\mathcal{I}$ be as above. Then the Banach algebra $\ell^{\infty}(\kappa^{+},\ell^{\infty}(\lambda^{+}))\mod \mathcal{I}$ is convex-transitive.
\end{theorem}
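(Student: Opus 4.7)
The plan is to exhibit $\ell^{\infty}(\kappa^{+},\ell^{\infty}(\lambda^{+}))/\mathcal{I}$ as a two-step quotient and then apply Theorem~\ref{thm: convex_ideal} twice. Set $\X:=\ell^{\infty}(\lambda^{+})/\ell^{\infty}_{\lambda}(\lambda^{+})$. First I would verify the isometric identification
\[
\ell^{\infty}(\kappa^{+},\ell^{\infty}(\lambda^{+}))/\mathcal{I}\;\cong\;\ell^{\infty}(\kappa^{+},\X)\big/\ell^{\infty}_{\kappa}(\kappa^{+},\X),
\]
by showing that both sides are normed by
\[
\inf\bigl\{\,\sup_{\alpha\notin A,\,\beta\notin B_{\alpha}} |f(\alpha,\beta)| \;:\; |A|\leq\kappa,\ |B_{\alpha}|\leq\lambda \,\bigr\}.
\]
For the left-hand side, any $g\in\mathcal{I}$ splits as $g=g_{1}+g_{2}$ with $g_{1}$ supported on a set $A\times\lambda^{+}$, $|A|\leq\kappa$, and each row of $g_{2}$ supported on some $B_{\alpha}$ of size at most $\lambda$; this gives the lower bound. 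For the upper bound, take $g_{1}(\alpha,\beta)=f(\alpha,\beta)1_{A}(\alpha)$ and $g_{2}(\alpha,\beta)=f(\alpha,\beta)1_{\kappa^{+}\setminus A}(\alpha)1_{B_{\alpha}}(\beta)$. The right-hand side has the same expression directly from the definitions of the iterated quotient norms together with the quotient map $\ell^{\infty}(\lambda^{+})\to\X$.

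Next, Theorem~\ref{thm: convex_ideal} applied to the inner quotient gives that $\X$ is uniformly convex-transitive: $\lambda^{+}$ is regular as a successor cardinal, the ideal of subsets of $\lambda^{+}$ of cardinality $<\lambda^{+}$ satisfies the homogeneity hypothesis (as noted in the discussion after Theorem~\ref{thm: convex_ideal}), and $\R$ is trivially uniformly convex-transitive. A second application, now with index cardinal $\kappa^{+}$, ideal the subsets of $\kappa^{+}$ of cardinality $\leq\kappa$, and coordinate space $\X$, yields uniform convex-transitivity of the outer quotient, hence the claim. A minor verification is that the null space $N_{\mathcal{I}}$ of Theorem~\ref{thm: convex_ideal} coincides with $\ell^{\infty}_{\kappa}(\kappa^{+},\X)$; this holds since the support of a sequence $(x_{\alpha})$ with $\lim_{\alpha,\mathcal{I}}\|x_{\alpha}\|=0$ is a countable union of $\leq\kappa$-sized sets, hence itself has cardinality $\leq\kappa$.

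The step requiring the most care is the isometric identification, since it asks for best approximants of $f$ in $\mathcal{I}$ in a ``separated'' form (a rectangle of rows plus per-row column-small corrections). If one prefers to avoid this identification, one can follow the paper's suggestion and modify the proof of Theorem~\ref{thm: convex_ideal} directly: replace the one-parameter isometries of form \eqref{eq: Iform} by two-parameter symmetries $(\alpha,\beta)\mapsto(\phi(\alpha),\psi_{\alpha}(\beta))$, perform an inner averaging over the $\psi_{\alpha}$ to push a representative onto an indicator of a rectangle modulo an $\ell^{\infty}_{\kappa}$-thin set, and then an outer averaging over $\phi$ to reach any target, exploiting the homogeneity of the ``$\leq\kappa$'' ideal on $\kappa^{+}$ and sign-flip symmetries exactly as in the original proof.
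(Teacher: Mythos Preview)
Your proposal is correct. The paper itself does not spell out a proof: it merely writes ``By modifying the proof of Theorem~\ref{thm: convex_ideal}'' and ends with a \qed. Your secondary sketch (two-parameter symmetries $(\alpha,\beta)\mapsto(\phi(\alpha),\psi_{\alpha}(\beta))$ together with inner/outer averaging and sign flips) is exactly that intended modification.

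Your primary approach, however, is genuinely different and cleaner. Rather than rerunning the averaging argument in two indices, you factor the quotient as a composite
\[
\ell^{\infty}(\kappa^{+},\ell^{\infty}(\lambda^{+}))\;\twoheadrightarrow\;\ell^{\infty}(\kappa^{+},\X)\;\twoheadrightarrow\;\ell^{\infty}(\kappa^{+},\X)\mod \ell^{\infty}_{\kappa}(\kappa^{+},\X),
\]
check that the kernel of the composite is precisely $\mathcal{I}$ (this is an elementary third-isomorphism-theorem computation, and your description of the splitting $g=g_{1}+g_{2}$ is the right one), and then invoke Theorem~\ref{thm: convex_ideal} twice. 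The only points to watch are: that the coordinate-wise quotient map onto $\ell^{\infty}(\kappa^{+},\X)$ is indeed a metric quotient (it is, by lifting each coordinate within $\epsilon$ of its $\X$-norm), and that $N_{\mathcal{I}}=\ell^{\infty}_{\lambda}(\lambda^{+})$ and $N_{\mathcal{I}}=\ell^{\infty}_{\kappa}(\kappa^{+},\X)$ in the two applications, which you address via the countable-union remark. What your approach buys is modularity: uniform convex-transitivity of the inner quotient $\X$ is recorded once and then fed back into the outer application, so no part of the proof of Theorem~\ref{thm: convex_ideal} needs to be reopened. The paper's route is more hands-on but would yield the same conclusion with essentially the same rotations; yours is the more economical argument.
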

\qed

\subsection{Corona type algebras}

In what follows all Hilbert spaces are considered over the complex field.
Becerra and Rodriguez proved (\cite[Corollary 4.6]{BR_manuscripta}) that the Calkin algebra 
$\mathcal{C}(\ell^2 )=\mathcal{B}(\ell^{2})\mod \mathcal{K}(\ell^{2})$ is convex-transitive. Next we will give 
a variant of this result.

\begin{theorem}
The algebra $\mathcal{B}(\ell^{2}(\omega_{1}))\mod \mathcal{S}(\ell^{2}(\omega_{1}))$ is convex-transitive.
The algebra $\mathcal{C}(\ell^{2}(\kappa))$ is $\tau$-convex-transitive, where 
$\kappa$ is any infinite cardinal and $\tau$ is the topology of uniform convergence in separable subspaces. 
\end{theorem}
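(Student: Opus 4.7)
My plan is to treat the two claims separately. For both, the overarching idea is to extract from $\tilde T$, via spectral calculus, a projection supported on a ``large'' subspace, and then exploit transitivity of the unitary group of $\ell^2(\kappa)$ on closed subspaces of the relevant cardinal type. For $\mathcal{B}(\ell^2(\omega_1))/\mathcal{S}(\ell^2(\omega_1))$ the ingredient that replaces compactness in the Becerra--Rodriguez proof is that any two closed subspaces of $\ell^2(\omega_1)$ of density $\omega_1$ are unitarily equivalent in $\mathcal{B}(\ell^2(\omega_1))$. For $\mathcal{C}(\ell^2(\kappa))$ with general $\kappa$, the topology $\tau$ is witnessed by separable subspaces, which localizes the problem to the separable case and enables a direct appeal to Becerra--Rodriguez.

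For the first claim, let $\tilde T,\tilde S$ be on the unit sphere of $\mathcal{B}(\ell^2(\omega_1))/\mathcal{S}(\ell^2(\omega_1))$. Polar decomposition and continuous functional calculus in the quotient $C^*$-algebra reduce to $0\le T\le 1$ in $\mathcal{B}(\ell^2(\omega_1))$ with $\|\tilde T\|=1$; then for each $\epsilon>0$ the spectral projection $P=E_{(1-\epsilon,1]}(T)$ has non-separable range and $T\ge (1-\epsilon)P$. The crux is to show $\tilde I\in \overline{\conv}(\mathcal{G}(\tilde T))$. Since $P\ell^2(\omega_1)$ has density $\omega_1$, a unitary of $\ell^2(\omega_1)$ identifies $\tilde P$ modulo $\mathcal{S}(\ell^2(\omega_1))$ with a coordinate projection $1_A\cdot \I+\mathcal{S}(\ell^2(\omega_1))$ for any $A\subset \omega_1$ with $|A|=|\omega_1\setminus A|=\omega_1$, and an averaging argument in the spirit of Theorem \ref{thm: convex_ideal}, applied to the ideal of countably supported subsets of $\omega_1$ (which meets the homogeneity hypotheses of that theorem), produces $\tilde I$ in the closed convex hull of unitary conjugates of $\tilde P$. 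Combining this with $T\ge (1-\epsilon)P$ and letting $\epsilon\to 0$ yields $\tilde I\in \overline{\conv}(\mathcal{G}(\tilde T))$. The bilateral isometric action $X\mapsto uXv$ then places every coset $uv+\mathcal{S}(\ell^2(\omega_1))$ in $\overline{\conv}(\mathcal{G}(\tilde T))$; lifting $\tilde S$ to $S\in \mathcal{B}(\ell^2(\omega_1))$ with $\|S\|\le 1+\delta$ and applying Russo--Dye inside $\mathcal{B}(\ell^2(\omega_1))$ realises $S/(1+\delta)$ as a norm-limit of convex combinations of unitaries, and passing to the quotient with $\delta\to 0$ yields $\tilde S\in \overline{\conv}(\mathcal{G}(\tilde T))$. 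The principal obstacle is the averaging step producing $\tilde I$ in the non-commutative quotient: general unitary conjugates of $\tilde P$ do not share a common diagonalising basis, so the combinatorics of Theorem \ref{thm: convex_ideal} has to be transported through the spectral identification of $\tilde P$ with a coordinate projection.

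For the second claim, let $\tilde A,\tilde B$ be on the unit sphere of $\mathcal{C}(\ell^2(\kappa))$ and fix a basic $\tau$-neighborhood of $\tilde B$ parameterized by a separable subspace $Y\subset \ell^2(\kappa)$ and $\epsilon>0$. Enlarge $Y$ to a separable closed subspace $Z$ invariant under $A,A^*,B,B^*$ which additionally absorbs sequences of unit vectors in $\ell^2(\kappa)$ approximately norming $\tilde A$ and $\tilde B$ and tending weakly to zero; a standard countable-closure argument provides such a $Z$ and ensures $\|A|_Z+\mathcal{K}(Z)\|=\|B|_Z+\mathcal{K}(Z)\|=1$ in the separable Calkin algebra $\mathcal{C}(Z)\cong \mathcal{C}(\ell^2)$. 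Applying \cite[Corollary 4.6]{BR_manuscripta} inside $\mathcal{C}(Z)$ furnishes unitaries $u_i,v_i\in \mathcal{B}(Z)$ and coefficients $a_i\ge 0$ with $\sum a_i=1$ such that $\sum_i a_i u_i(\tilde A|_Z)v_i$ is within $\epsilon$ of $\tilde B|_Z$ in $\mathcal{C}(Z)$. Extending each $u_i,v_i$ to a unitary of $\ell^2(\kappa)$ by the identity on $Z^\perp$, and the accompanying compact perturbations from $\mathcal{K}(Z)$ to $\mathcal{K}(\ell^2(\kappa))$ by zero on $Z^\perp$, then delivers the required $\tau$-approximation of $\tilde B$ upon restriction to $Y\subset Z$.
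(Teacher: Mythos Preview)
Your treatment of the second claim is essentially the same as the paper's: both localize to a countable (separable) block containing the data of the given $\tau$-neighbourhood and carrying full Calkin norm for the given operator, invoke the separable Becerra--Rodriguez result there, and extend the resulting unitaries by the identity on the complement. The paper phrases the localization as a recursive support-chasing (``Observation~1'') rather than a $*$-invariant closure with weakly-null norming sequences, but the content is the same. One minor point: you do not actually need $\|B|_Z+\mathcal{K}(Z)\|=1$; it is enough that a norm-$1$ lift $B$ restricts to the closed unit ball of $\mathcal{C}(Z)$.

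For the first claim your route genuinely differs from the paper's. The paper partitions $\omega_1$ into countable blocks $\Gamma_\alpha$, applies the spectral theorem \emph{inside each} $\ell^2(\Gamma_\alpha)$ to obtain infinite-rank projections $\pi_\alpha$ close to $\pi_\alpha P_\alpha T$, and then uses a separate averaging fact (``Observation~2'': for an infinite-rank projection $P$ on $L^2$ one has $\|\I-\frac1n\sum R_i^{-1}PR_i\|\le 1/n$) simultaneously on each block to reach $\I$. You instead take a \emph{single global} spectral projection $P=E_{(1-\epsilon,1]}(T)$, note that it has non-separable range (else $\|\tilde T\|\le 1-\epsilon$), conjugate $\tilde P$ to a coordinate projection $1_A\cdot\I$, and then average coordinate-permutation conjugates of $1_A\cdot\I$ to approximate $\tilde\I$. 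Your approach is conceptually cleaner and avoids the block decomposition; the paper's has the advantage of reducing literally to the separable case and to the concrete $L^2$ computation of Observation~2.

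Two places in your sketch that deserve to be made explicit. First, the passage from $T\ge(1-\epsilon)P$ to ``$\tilde P$ lies near $\conv(\mathcal{G}(\tilde T))$'' is not just an order inequality: one should note that $P$ commutes with $T$, so $\tfrac12\bigl(T+(\I-2P^\perp)T\bigr)=PT=PTP$ lies in $\conv(\mathcal{G}(T))$ and $\|P-PTP\|\le\epsilon$. Second, the ``averaging in the spirit of Theorem~\ref{thm: convex_ideal}'' is not a black-box citation, since that theorem concerns $\ell^\infty$-quotients; but the needed argument is elementary and does \emph{not} face the obstacle you worry about. After conjugating $\tilde P$ to $1_A\cdot\I$ with $|A|=|\omega_1\setminus A|=\omega_1$ (the case $|\omega_1\setminus \mathrm{ran}\,P|\le\aleph_0$ gives $\tilde P=\tilde\I$ directly), partition $\omega_1=\bigcup_{i=1}^n C_i$ into $n$ uncountable pieces, choose bijections $\phi_i$ of $\omega_1$ with $\phi_i(A)=\omega_1\setminus C_i$, and observe $\frac1n\sum_i u_{\phi_i}(1_A\cdot\I)u_{\phi_i}^{-1}=(1-\tfrac1n)\I$. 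These are honest permutation unitaries, so no issue of ``non-commuting diagonalisations'' arises.
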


\begin{proof}
The proof is a modification of the proof of \cite[Theorem 4.5]{BR_manuscripta}, which states that
$\B_{\mathcal{B}(\ell^{2})}\subset\overline{\conv}(\mathcal{G}_{\mathcal{B}(\ell^{2})}(T))$, whenever $T\in \mathcal{B}(\ell^{2})$ satisfies
$\|T+\mathcal{K}(\ell^{2})\|_{\mathcal{B}(\ell^{2})\mod \mathcal{K}(\ell^{2})}=1$. We denote by $\{e_{\gamma}\}_{\gamma<\kappa}$ the canonical orthonormal basis of $\ell^{2}(\kappa)$. The case where $\kappa$ is countable is known from the above 
reference.

\textit{Observation 1}. Given $T\in \mathcal{B}(\ell^{2}(\kappa))$ such that $\|T+\mathcal{K}(\ell^{2}(\kappa))\|_{\mathcal{B}(\ell^{2}(\kappa))\mod \mathcal{K}(\ell^{2}(\kappa))}=1$, and a separable subspace $\Y\subset \ell^{2}(\kappa)$, there is a countable subset $\Gamma\subset \kappa$ such that 
$T(\ell^{2}(\Gamma))\subset \ell^{2}(\Gamma)\supset \Y$, and 
$\|T|_{\ell^{2}(\Gamma)} + \mathcal{K}(\ell^{2}(\Gamma))\|_{\mathcal{B}(\ell^{2}(\Gamma))\mod \mathcal{K}(\ell^{2}(\Gamma))}=1$.
Indeed, this can be obtained as follows. 
We select a countable subset $\Gamma_{0}\subset \kappa$ such that $\Y\subset \ell^{2}(\Gamma_{0})$ and 
$\|T|_{\ell^{2}(\Gamma_{0})}+\mathcal{K}(\ell^{2}(\Gamma_{0}))\|_{\mathcal{B}(\ell^{2}(\Gamma_{0}))\mod \mathcal{K}(\ell^{2}(\Gamma_{0}))}=1$. 
Then we define recursively $\Gamma_{n+1}$ to be the union of $\Gamma_{n}$ and the countable support
of $T(\ell^{2}(\Gamma_{n}))$ for $n<\omega$. Then the set $\Gamma=\bigcup_{n<\omega}\Gamma_{n}$ is as required. 
Clearly $T|_{\ell^{2}(\Gamma)}$ is positive if $T$ is such.
\textit{Observation 2.} Let $P$ be an orthogonal projection on $L^{2}$ with infinite-dimensional range. 
Then for each $n\in \N$ there are rotations $R_{1},R_{2},\ldots, R_{n}\in \mathcal{G}_{L^{2}}$ such that 
\[\|\I-\frac{1}{n}\sum_{i=1}^{n}R_{i}^{-1}PR_{i}\|\leq \frac{1}{n}.\]
Indeed, by rotating $L^{2}$ we may assume without loss of generality that the range of $P$ contains 
that of $1_{[0,1/2]}\cdot \I$. We may select the isometries $R_{i}$ in such a way that 
the range of $R_{i}^{-1}PR_{i}$ contains that of $1_{[0,1-2^{-i}]\cup [1-2^{-i-1},1]}\cdot \I$, which yields the claim. 

Next we will indicate the required changes to the proof of \cite[Theorem 4.5]{BR_manuscripta} in order to 
verify the first part of the statement. We fix $\tilde{T}\in \mathcal{B}(\ell^{2}(\omega_{1}))\mod \mathcal{S}(\ell^{2}(\omega_{1}))$ with $\|\tilde{T}\|=1$. By using the regularity of $\omega_{1}$ we may select a representative $T\in \mathcal{B}(\ell^{2}(\omega_{1}))$ having the same norm. We treat only the case where 
$T$ is positive. 

We apply recursion of length $\omega_{1}$ to construct a partitioning of 
$\omega_{1}$ into countable subsets $\Gamma_{\alpha},\ \alpha<\omega_{1},$ such that 
\[\|P_{\alpha}T|_{\ell^{2}(\Gamma_{\alpha})}+\mathcal{K}(\ell^{2}(\Gamma_{\alpha}))\|_{\mathcal{B}(\ell^{2}(\Gamma_{\alpha}))\mod \mathcal{K}(\ell^{2}(\Gamma_{\alpha}))}=1\quad \mathrm{for}\ \alpha< \omega_{1}.\]
Here $P_{\alpha}$ is the orthogonal projection onto $\ell^{2}(\Gamma_{\alpha})$. Indeed, this can be obtained by applying Observation 1. and the fact that the countable segments of $\omega_{1}$ are negligible in calculating the quotient norm $\|\cdot\|_{\mathcal{B}(\ell^{2}(\omega_{1}))\mod \mathcal{S}(\ell^{2}(\omega_{1}))}$.

Let us consider self-adjoint idempotents $\pi_{\alpha}\in \mathcal{B}(\ell^{2}(\Gamma_{\alpha}))$, more precisely, 
orthogonal projections to infinite-dimensional subspaces provided by the spectral theorem, similarly as in the proof of \cite[Theorem 4.5]{BR_manuscripta}, such that $\|\pi_{\alpha}-\pi_{\alpha} P_{\alpha}T|_{\ell^{2}(\Gamma_{\alpha})}\|_{\mathcal{B}(\ell^{2}(\Gamma_{\alpha}))}<\epsilon$ for $\alpha<\omega_{1}$. 
Denote by $\pi_{\alpha}^{\bot}$ the coprojection of $\pi_{\alpha}$.
By considering the averages of $P_{\alpha}T$ and $(\I-2\pi_{\alpha}^{\bot})P_{\alpha}T$ we observe that 
\[\pi_{\alpha}\in \overline{\left\{\frac12 (P_{\alpha}T|_{\ell^{2}(\Gamma_{\alpha})}+RP_{\alpha}T|_{\ell^{2}(\Gamma_{\alpha})}):\ R\in 
\mathcal{G}_{\mathcal{B}(\ell^{2}(\Gamma_{\alpha}))}\right\}}.\] 

By Observation 2. we have that 
\[\prod_{\alpha<\omega_{1}} \I_{\ell^{2}(\Gamma_{\alpha})}\in \overline{\conv}(\{\ \{R_{\alpha}^{(1)}P_{\alpha}T R_{\alpha}^{(2)}\}_{\alpha}:\ 
\{R_{\alpha}^{(1)}\}_{\alpha},\{R_{\alpha}^{(2)}\}_{\alpha}\in \prod_{\alpha}\mathcal{G}_{\mathcal{B}(\ell^{2}(\Gamma_{\alpha}))}\}.\]
By using the Russo-Dye Theorem we obtain that 
$\B_{\mathcal{B}(\ell^{2}(\omega_{1}))}=\overline{\conv}(\mathcal{G}_{\ell^{2}(\omega_{1})})$. Thus 
$\B_{\mathcal{B}(\ell^{2}(\omega_{1}))}=\overline{\conv}(\mathcal{G}_{\mathcal{B}(\ell^{2}(\omega_{1}))}(T))$. 

It is clear that all the isometries of $\mathcal{B}(\ell^{2}(\omega_{1}))$ applied above are realized as multiplications
$x\mapsto axb$. Thus these isometries preserve the multiplicative structure of the space. This yields that 
all $2$-sided ideals are preserved by these isometries. In particular, this holds for the ideal 
$\mathcal{S}(\ell^{2}(\omega_{1})$, and thus the isometries considered induce isometries on 
$\mathcal{B}(\ell^{2}(\omega_{1}))\mod \mathcal{S}(\ell^{2}(\omega_{1}))$. This justifies the first part of the statement. 

To verify the second part of the statement, fix $T,T_{1}, U_{1},\ldots U_{n}\in \mathcal{B}(\ell^{2}(\kappa))$ with 
$\|T_{1}\|_{\mathcal{B}(\ell^{2}(\kappa))}\leq \|T+\mathcal{K}(\ell^{2}(\kappa))\|_{\mathcal{B}(\ell^{2}(\kappa))\mod \mathcal{K}(\ell^{2}(\kappa))}=1$, a separable subspace 
$\Y\subset \ell^{2}(\kappa)$ and $\epsilon>0$ such that 
$\|T_{1}|_{\Y} - U_{1}|_{\Y}\|_{\mathcal{B}(\Y,\ell^{2}(\kappa))},\ldots,\|T_{1}|_{\Y} - U_{n}|_{\Y}\|_{\mathcal{B}(\Y,\ell^{2}(\kappa))}<\epsilon$.
We have just fixed a $\tau$-open neighbourhood of $T_{1}$, see \eqref{eq: tauneigh}.

By using the argument of Observation 1. we get that there is a countable subset $\Gamma\subset \kappa$
such that 
\begin{enumerate}
\item[(i)]{$\Y\subset \ell^{2}(\Gamma)$.}
\item[(ii)]{$T(\ell^{2}(\Gamma)),T_{1}(\ell^{2}(\Gamma)),U_{1}(\ell^{2}(\Gamma)),\ldots,U_{n}(\ell^{2}(\Gamma)) \subset \ell^{2}(\Gamma)$.}
\item[(iii)]{$\|T|_{\ell^{2}(\Gamma)}+\mathcal{K}(\ell^{2}(\Gamma))\|_{\mathcal{B}(\ell^{2}(\Gamma))\mod \mathcal{K}(\ell^{2}(\Gamma))}=1$.}
\end{enumerate}

Note that our considerations reduce to the space $\ell^{2}(\Gamma)$ by applying the average of the isometries
$\I$ and $1_{\ell^{2}(\Gamma)}\cdot \I - 1_{\ell^{2}(\kappa\setminus \Gamma)}\cdot \I$. Note that multiplication with these isometries 
gives isometries on $\mathcal{B}(\ell^{2}(\kappa))$ and also preserve the ideal $\mathcal{K}(\ell^{2}(\kappa))$.

Now we may apply \cite[Theorem 4.5]{BR_manuscripta}, which yields that 
\[T_{1}|_{\ell^{2}(\Gamma)}\in \overline{\conv}(\mathcal{G}_{\mathcal{B}(\ell^{2}(\Gamma))}(T|_{\ell^{2}(\Gamma)}))
\subset \mathcal{B}(\ell^{2}(\Gamma)).\]
This means that the set of mappings $\conv(\mathcal{G}_{\mathcal{B}(\ell^{2}(\Gamma))}(T|_{\mathcal{B}(\ell^{2}(\Gamma))}))$,
embedded in $\mathcal{B}(\ell^{2}(\kappa))$ in a natural way, intersects the $\tau$-open neighbourhood 
\begin{equation}\label{eq: tauneigh}
\{U\in \mathcal{B}(\ell^{2}(\kappa)):\ \|U|_{\Y} - U_{1}|_{\Y}\|_{\mathcal{B}(\Y,\ell^{2}(\kappa))},\ldots,
\|U|_{\Y} - U_{n}|_{\Y}\|_{\mathcal{B}(\Y,\ell^{2}(\kappa))}<\epsilon\}
\end{equation}
of $T_{1}$. Since $T_{1}$ and the neighbourhood were arbitrary, we have that 
\[\B_{\mathcal{B}(\ell^{2}(\kappa))}=\overline{\conv}^{\tau}(\mathcal{G}_{\mathcal{B}(\ell^{2}(\kappa))}(T)).\]

The ideal $\mathcal{K}(\ell^{2}(\kappa))$ is invariant under the rotations applied, as mentioned in the proof of 
\cite[Corollary 4.6]{BR_manuscripta} and thus we have the claim.
\end{proof}

Next we will study the Banach algebra $\mathcal{B}(L^{\infty})$, where the composition of maps is the algebra operation. 

We denote by $\mathcal{I}$ a (non-unital) left ideal of $\mathcal{B}(L^{\infty})$ consisting of elements $T$
such that $\lim_{t\to 1}\|T\circ 1_{[t,1]}\cdot \I\|=0$. Let $\mathfrak{M}$ be the Stone space of $\mathrm{Bor}([0,1])\mod \ker(m)$ and we regard $C(\mathfrak{M})=L^{\infty}$ isometrically. Recall that $C(K)^{\ast}=\mathrm{M}(K)$ for any compact $K$ by the Riesz theorem. Let us consider $M=\bigwedge_{t<1}\neg [0,t]\subset \mathfrak{M}$ under natural identifications. This set consists of continuum many ultrafilters on $\mathrm{Bor}([0,1])\mod \ker(m)$.

We define a subalgebra $\mathcal{A}\subset \mathcal{B}(L^{\infty})$ by 
\[\mathcal{A}=\mathcal{B}(L^{\infty})\cap \mathcal{I}^{\bot\bot}\subset \mathcal{B}(L^{\infty})^{\ast\ast}.\]

It is easy to see that $\mathcal{A}$ contains the space $\mathcal{B}_{\ast}(L^\infty)$ of the dual (i.e. $\omega^{\ast}$-$\omega^{\ast}$-continuous) operators, e.g. $\I_{L^{\infty}}$. An essential point here is that $\mathcal{A}$ does not contain any operators $T\in \mathcal{B}(L^{\infty}),\ T\neq 0,$ such that $T\circ 1_{[0,t]}\cdot \I=0$ 
for all $t<1$. Simple examples of such operators $T$ on $C(\mathfrak{M})$ are the finite-rank operators $f\mapsto \sum_{i} x_{i}^{\ast}(f)y_{i}$, where $y_{i}\in C(\mathfrak{M})$ and $x_{i}^{\ast}$ are the point evaluations $\delta_{k_{i}}$ for points $k_{i}$ in the compact set $M$. In fact, for all $T\in \mathcal{B}(L^{\infty})$ and all $x^{\ast}\in C(\mathfrak{M})^{\ast}$ it holds that $x^{\ast}\circ T \in \mathrm{M}(\mathfrak{M})$ is supported outside of $M$ (see Proposition \ref{prop: Mideal}). We denote $\mathcal{I}_{\ast}=\mathcal{I}\cap \mathcal{B}_{\ast}(L^\infty)$.

Observe that fixing a positive functional $f \in \S_{c_{0}^{\bot}}\subset (\ell^{\infty})^{\ast}$ and writing
\[F(T)=f(\{\E(T(1_{[n,n+1]})\ |\ 1_{[n,n+1]})\}_{n\in\N}),\quad T\in \mathcal{B}(L^{\infty})\] 
defines a $\mathrm{SOT}$-continuous functional in $\S_{\mathcal{B}(L^{\infty})^{\ast}}$ such that $F(\mathcal{I})=\{0\}$ and $F(\I)=1$.

\begin{theorem}\label{thm: Calkin}
The algebras $\mathcal{A}\mod \mathcal{I}$ and $\mathcal{B}_{\ast}(L^{\infty})\mod \mathcal{I}_{\ast}$ are $\mathrm{SOT}$-convex-transitive. 
\end{theorem}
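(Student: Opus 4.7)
The plan is to adapt the argument from the preceding theorem on $\mathcal{C}(\ell^{2}(\kappa))$, with $L^{\infty}$ playing the role of $\ell^{2}(\kappa)$ and $\mathcal{I}$ that of $\mathcal{K}(\ell^{2}(\kappa))$, leveraging convex-transitivity of $L^{\infty}$ in place of the homogeneity of Hilbert space. I will describe the argument for $\mathcal{A}/\mathcal{I}$; the version for $\mathcal{B}_{\ast}(L^{\infty})/\mathcal{I}_{\ast}$ follows by keeping all operators and conjugating isometries inside $\mathcal{B}_{\ast}(L^{\infty})$, which is preserved by multiplications by multiplication operators and by duals of isometries of $L^{1}$.

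First I would fix $\tilde T$ with $\|\tilde T\|=1$ and a representative $T\in\mathcal{A}$. Since $J_{s}:=-T\circ(1_{[0,s)}\cdot\I)\in\mathcal{I}$ for each $s<1$, a short calculation gives
\[\|\tilde T\|=\lim_{s\to 1^{-}}\|T\circ(1_{[s,1]}\cdot\I)\|,\]
so for any $\delta>0$ there exist $s<1$ and a unit-norm $f_{s}\in L^{\infty}$ supported in $[s,1]$ with $\|Tf_{s}\|>1-\delta$. Given a target $\tilde U\in\mathbf{B}_{\mathcal{A}/\mathcal{I}}$ and a basic SOT-neighborhood determined by simple-function test vectors $x_{1},\ldots,x_{n}$ and $\epsilon>0$, the plan is to split the task via Fact~\ref{Fact1}: first reach $\tilde{\I}$ from $\tilde T$, then reach $\tilde U$ from $\tilde{\I}$.

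For the first step I would observe that left multiplications $T\mapsto M_{a}T$ and right multiplications $T\mapsto TM_{a}$ by unimodular $a\in L^{\infty}$ induce isometries on $\mathcal{A}/\mathcal{I}$, since $|a|=1$ preserves the norms defining $\mathcal{I}$. A change-of-signs averaging in the spirit of Theorems~\ref{Sorgenfrey} and~\ref{thm: trees}, applied to $Tf_{s}$, yields a convex combination $\sum_{j}\lambda_{j}M_{a_{j}}T$ whose action on $f_{s}$ is close to a positive constant function on a clopen piece of $[s,1]$; iterating with further multiplications and invoking the functional $F$ (which is SOT-continuous and has $F(\mathcal{I})=0$, $F(\I)=1$) then produces $\tilde{\I}\in\overline{\conv}^{\mathrm{SOT}}(\mathcal{G}(\tilde T))$, with residual errors outside $[s,1]$ absorbed into $\mathcal{I}$ via the SOT-on-quotient formalism.

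For the second step, the orbit of $\tilde{\I}$ under isometries $T\mapsto VTW$ with $V,W\in\mathcal{G}_{L^{\infty}}$ preserving $\mathcal{I}$ consists of the classes $\widetilde{VW}$. By convex-transitivity of $L^{\infty}$, I would approximate each $U(x_{i})$ by a convex combination $\sum_{j}\lambda_{j}V_{j}W_{j}(x_{i})$; doing so jointly for $i=1,\ldots,n$ is possible by restricting attention to a common tail $[s,1]$, which is permitted modulo $\mathcal{I}$, and using a single measure-preserving rearrangement on $[s,1]$ that moves all simple test functions simultaneously. Combining both stages by Fact~\ref{Fact1} then yields $\tilde U\in\overline{\conv}^{\mathrm{SOT}}(\mathcal{G}(\tilde T))$. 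The hard part will be identifying which isometries of $\mathcal{A}$ preserve $\mathcal{I}$: unlike compact operators in the Calkin setting (invariant under every rotation of $\ell^{2}$), $\mathcal{I}$ is tied to the specific point $1\in[0,1]$, so most measure-preserving automorphisms of $[0,1]$ do not preserve it, and proving that multiplications by unimodular functions together with automorphisms acting locally on the tails $[s,1]$ generate a sufficiently rich group to obtain convex-transitivity is the technical core of the argument.
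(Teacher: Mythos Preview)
Your two-step strategy (reach $\tilde{\I}$ from $\tilde{T}$, then reach $\tilde{U}$ from $\tilde{\I}$) is reasonable in outline, but the second step contains a genuine gap that is precisely the technical heart of the theorem. You write that by convex-transitivity of $L^{\infty}$ you can approximate each $U(x_{i})$ by a convex combination $\sum_{j}\lambda_{j}V_{j}W_{j}(x_{i})$ with $V_{j},W_{j}\in\mathcal{G}_{L^{\infty}}$, and then assert that this can be done jointly for $i=1,\ldots,n$. But convex-transitivity of $L^{\infty}$ is a statement about the orbit of a \emph{single} vector: it says $\overline{\conv}(\mathcal{G}_{L^{\infty}}(x))=\B_{L^{\infty}}$ for each fixed $x$. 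What you need is that $\overline{\conv}^{\mathrm{SOT}}(\mathcal{G}_{L^{\infty}})=\B_{\mathcal{B}(L^{\infty})}$ as \emph{operators}, i.e.\ a single convex combination working simultaneously on finitely many test vectors. These are very different assertions, and the second does not follow from the first by any soft argument; your appeal to ``a single measure-preserving rearrangement on $[s,1]$ that moves all simple test functions simultaneously'' does not address why the resulting operators can hit an arbitrary $U$, which need not act like any composition operator.

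The paper handles exactly this difficulty with substantial machinery you are missing: it discretises via an inverse system of finite-rank approximations $T_{\tilde{\rho}}=A_{\rho}TA_{\rho}$ indexed by finite measurable partitions, proves $\mathrm{SOT}$-convergence $T_{\tilde{\rho}}\to T$, and then shows each matrix $M_{\tilde{\rho}}$ (with $\ell^{1}$-row-sums $\leq 1$) is an average of elementary operators $f\mapsto\sum_{i}\E(f\mid C_{i})1_{E_{i}}$ via a combinatorial ``$3$-dimensional matrix'' decomposition. Reaching those elementary operators from a given $S$ in turn uses a careful construction of successive intervals $A_{i}$ with $\|S\circ 1_{A_{i}}\cdot\I\|>1-\epsilon$ (this is where Proposition~\ref{prop: Mideal} on the support of $x^{\ast}\circ T$ enters, to get continuity of $t\mapsto\|T\circ 1_{[s,t]}\cdot\I\|$ at $t=1$), together with a sliding-hump argument. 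Your first step is also underspecified --- multiplications $M_{a_{j}}T$ alone do not obviously converge to $\I$ in $\mathrm{SOT}$, and invoking the functional $F$ does not produce convergence, only a normalisation --- but the decisive missing ingredient is the discretisation/averaging construction that replaces the Russo--Dye step from the Hilbert-space case.
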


We will apply inverse systems of suitable approximations in the proof, which closely resembles a rather usual application of Markov chain discretization in Ergodic theory. Also, one recognizes a martingale condition in the formula \eqref{eq: martingale}. On the other hand, the philosophy of the construction is very similar to the one discussed in connection with the continuous functions on the Cantor set in Section \ref{sect: tree}.

\begin{proof}[Proof of Theorem \ref{thm: Calkin}]
The proofs for the claims are similar, except that the case of $\mathcal{B}_{\ast}(L^{\infty})\mod \mathcal{I}_{\ast}$ is a bit easier, so we will restrict our attention to the algebra $\mathcal{A}\mod \mathcal{I}$.
 
We will work with \emph{finite} decompositions coming from $\mathrm{Bor}([0,1])\mod \ker(m)$, written $\rho$, or so, 
and the square $\rho\times \rho$ is denoted by $\tilde{\rho}$. Let $\Lambda$ be the lattice of all such decompositions $\tilde{\rho}$. It has the operation $\tilde{\rho}_{1}\vee\tilde{\rho}_{2}$ of taking the unique common refinement of the decompositions, and $\tilde{\rho}_{1}\wedge\tilde{\rho}_{2}$ is the finest decomposition such that both $\tilde{\rho}_{1}$
and $\tilde{\rho}_{2}$ refine it. This lattice is bounded below by the element $[0,1]^{2}$.

For all $T\in \mathcal{B}(L^{\infty}),\ x\in L^{\infty}$ we define an inverse system $\{M_{\tilde{\rho}}\}_{\tilde{\rho}\in \Lambda}$
of square matrices of reals, essentially of mappings $\{1,\ldots,n\}^2 \to \R$ under suitable identifications, but it is more convenient write them as $M_{\tilde{\rho}}=M_{\tilde{\rho}}(T)[x]$. These are defined as follows: given $\tilde{\rho}\in \Lambda$ we put 
$M_{\tilde{\rho}}=[c_{A,B}]_{(A,B)\in \tilde{\rho}}$, where  
\[c_{A,B}(T,x)=\E(T(\E(x\ |\ B)1_{B})\ |\ A),\quad x,1_{B}\in L^{\infty}.\]
The corresponding finite-rank approximation of $T$ and $x$ are 
\[T_{\tilde{\rho}}[x]=\sum_{A\in \rho} (\sum_{B\in \rho}c_{A,B}(T,x))1_{A},\ A_{\rho}(x)=\sum_{B\in \rho}\E(x\ |\ B) 1_{B}, \]
and it is easy to verify that $T_{\tilde{\rho}}(x)=A_{\rho}TA_{\rho}(x)$. Clearly $A_{\rho}$ is a contractive projection.

The binding maps are given as follows: if $\tilde{\rho}_{1}\leq \tilde{\rho}_{2}$
then $\phi_{\tilde{\rho}_{1}}^{\tilde{\rho}_{2}}\colon M_{\tilde{\rho}_{2}}\mapsto M_{\tilde{\rho}_{1}}$ 
is given by 
\[[\phi_{\tilde{\rho}_{1}}^{\tilde{\rho}_{2}}(M_{\tilde{\rho_{2}}})]_{(A,B)\in \tilde{\rho}_{1}}(T)[x]= 
[\ \E(T_{\tilde{\rho}_{2}}(\E(x\ |\ B)1_{B})\ |\ A)\ ]_{(A,B)\in \tilde{\rho_{1}}}
.\]
Observe that the corresponding operators then satisfy
\begin{equation}\label{eq: martingale}
1_{A}\cdot T_{\tilde{\rho}_{2}}[1_{B}x]=1_{A}\cdot T_{\tilde{\rho}_{1}}[1_{B}x],\quad \mathrm{for}\ (A,B)\in \tilde{\rho}_{1}.
\end{equation}

Since the simple functions are dense in $L^{\infty}$, we observe that the net $(A_{\rho}(x))_{\rho}$ converges to $x$
in the norm. Similarly, the net $(A_{\rho}(Tx))_{\rho}$ converges to $T(x)$.
Given $x\in L^{\infty}$ and $\tilde{\rho}_{1}$, we obtain according to the compatibility condition
\eqref{eq: martingale} that $A_{\rho_{1}}(T-T_{\tilde{\rho}_{2}})A_{\rho_{1}}x=A_{\rho_{1}}(T-A_{\rho_{2}}T)A_{\rho_{1}}x$ for 
any $\tilde{\rho}_{1}\leq \tilde{\rho}_{2}$. Now, we we deduce from the convergence of the above nets that 
$\lim_{\tilde{\rho}_{2}}\|A_{\rho_{1}}(T-T_{\tilde{\rho}_{2}})x\|=0$ for any $x$ and $\rho_{1}$. By using the fact that 
the simple functions are dense in $L^{\infty}$ we conclude that 
\begin{equation}\label{eq: taulim}
\mathrm{SOT}-\lim_{\tilde{\rho}}T_{\tilde{\rho}}=T.
\end{equation}

We claim that for each $T\in \mathcal{A}$ it holds that 
$\lim_{t \to 1}\|T\circ 1_{[s,t]}\cdot \I\|=\|T\circ 1_{[s,1]}\cdot \I\|$ for $0<s<1$. Indeed, assume to the contrary that 
\begin{equation}\label{eq: limT}
\lim_{t \to 1}\|T\circ 1_{[s,t]}\cdot \I\|=\|T\circ 1_{[s,1]}\cdot \I\| - \delta
\end{equation}
for a given $s$ and $\delta>0$. Let $f\in \S_{L^{\infty}}$ be such that 
$\|T\circ 1_{[s,1]}\cdot \I (f)\|> \|T\circ 1_{[s,1]}\cdot \I\|-\delta/2$. 
Then fix $F\in \S_{\mathcal{A}^{\ast}}$ such that $F(T\circ 1_{[s,1]}\cdot \I (f))> \|T\circ 1_{[s,1]}\cdot \I (f)\| - \delta/2$. Thus $F(T\circ 1_{[s,1]}\cdot \I (f))> \|T\circ 1_{[s,1]}\cdot \I\|-\delta$. Recall that 
$F\circ T$ is supported outside of $M$, see Proposition \ref{prop: Mideal}. Since $t\mapsto F\circ T (1_{[s,t]}\cdot f)$ is not continuous at $1$ according to \eqref{eq: limT}, we have a contradiction. 

Fix $\widehat{T},\widehat{S}\in \S_{\mathcal{A}\mod \mathcal{I}}$, and we will pick their representatives $T,S\in \mathcal{B}(L^{\infty})$, respectively. To verify the $\mathrm{SOT}$-convex-transitivity, our strategy is to apply rotations
$R$ on $\mathcal{B}(L^{\infty})$ fixing the subspace $\mathcal{I}$ as a subset (and thus also $\mathcal{A}=\mathcal{B}(L^{\infty})\cap \mathcal{I}^{\bot\bot}$) such that $T\in \overline{\conv}^{\mathrm{SOT}}(RS:\ R)$. We may assume without loss of generality, possibly by applying approximation later, that $\|T\|=\|S\|=1$. Actually, according to \eqref{eq: taulim} we are only required to show that 
$T_{\tilde{\rho}}\in \overline{\conv}^{\mathrm{SOT}}(RS: R)$ for each $\tilde{\rho}\in \Lambda$.

We claim that for each $\epsilon>0$ there exists a sequence $(A_{i})$ of successive open intervals $A_{i}\subset [0,1]$ 
such that $\|S \circ 1_{A_{i}}\cdot \I\|>1-\epsilon$. Indeed, the function 
$t\mapsto  \|S \circ 1_{[0,t]}\cdot \I\|$ has the value $1$ at $t=1$, where the function is continuous. 
Therefore we may pick $s_{1}<1$ such that $\|S \circ 1_{[0,s_{1}]}\cdot \I\|>1-\epsilon$. Put $A_{1}=(0,s_{1})$.
It follows from the definition of the $\|\cdot\|_{\mathcal{A}\mod \mathcal{I}}$ norm that $\|S \circ 1_{[s_{1},1]}\cdot \I\|=1$.
We apply the above continuity observation again to obtain a real $s_{2}\in (s_{1},1)$ such that 
$\|S \circ 1_{[s_{1},s_{2}]}\cdot \I\|>1-\epsilon$. Then we put $A_{2}=(s_{1},s_{2})$. 
We proceed recursively in this fashion to get the required sequence of intervals $A_{i+1}=(s_{i},s_{i+1})$.

Since the sets $A_{i}$ are disjoint, we have $\|S\circ 1_{A_{i}}\cdot \I\|\geq 1-\epsilon$ for all $i$. For each $i$ let 
\[B_{i}=\bigvee_{y}[\{t\in [0,1]:\ |y(t)|\geq 1-\epsilon\}]\quad (\mathrm{mod}\ \ker(m))\]
where $y$ ranges in elements of $L^{\infty}$ having the form $y=S(1_{A_{i}}x),\ x\in L^{\infty}$. Observe that 
\begin{equation}\label{eq: 1B}
\|1_{B_{j}}\cdot S \circ 1_{A_{i}}\cdot \I\|\leq \epsilon\ \mathrm{for}\ i\neq j. 
\end{equation}

Fix a decomposition $\tilde{\rho}\in \Lambda$. Let us enumerate $\rho$ in the following way: $\rho\ni C_{i}\subset [0,1],\ 1\leq i \leq k,$ such that $\mathrm{ess\ sup}\ C_{i}=1$ and $\rho\ni C_{i}\subset [0,1],\ k+1\leq i\leq m,$ such that 
$\mathrm{ess\ sup}\ C_{i}<1$. We rotate $\mathcal{B}(L^{\infty})$ by permuting the measure algebra 
$\mathrm{Bor}([0,1])\mod \ker(m)$ in such a way that we may identify $C_{k+1}=A_{1},\ldots, C_{m}=A_{m-k}$ 
and $C_{i}=\bigvee_{n\in \N}A_{i+nm-k}$ for $1\leq i\leq m$. 
Here the careful selection of the correspondences is required to maintain $\mathcal{A}$ and $\mathcal{I}$. 
On the other hand, since these are only \emph{left} ideals, such restrictions do not apply on mapping the sets $B_{i}$.
We collect and enumerate the sets $B_{i}$ according to the same rule by which we treated the sets $A_{i}$ 
such that \eqref{eq: 1B} holds.

By applying obvious rotations $R$ induced by $L^{\infty}=\ell^{\infty}(L^{\infty})$, which is a uniformly convex-transitive space, we obtain that there is $S_{0}\in \overline{\conv}(\{RS:\ R\})$ such that $1_{(2^{i-1},2^{i})}(t)S_{0}1_{(2^{j-1},2^{j})}x=\delta_{i,j}$ a.e. Indeed, the set $[0,1]\setminus \bigvee B_{i}$ can be neglected by applying an averaging sliding hump argument with similar idea as in \eqref{eq: phiit} (or in the argument of 
\cite[Thm. 3.4]{RT} involving sets $\Delta_{n}$) and $\epsilon$ in \eqref{eq: 1B} can be taken to $0$, since we take 
the closure.

By applying the above sets we aim to find 
$S_{1}\in \overline{\conv}(\mathcal{G}_{\mathcal{A}\mod\mathcal{I}}(S_{0}))$ such that 
$\|A_{\rho}(T - S_{1})A_{\rho}\|<\epsilon$.
We claim that $\{G_{\tilde{\rho}}:\ G\in \overline{\conv}(\mathcal{G}_{\mathcal{A}\mod\mathcal{I}}(S_{0}))\}$ contains all norm-$1$ finite-rank operators of the form 
\begin{equation}\label{eq: fsum}
f\mapsto \sum_{i} \E(f\ |\ C_{i}) 1_{E_{i}}, 
\end{equation}
where the sets $E_{i}$ are disjoint and formed by taking unions of sets $C_{i}$. We do not assume that $\bigvee_{i} E_{i}=[0,1]$. Indeed, this is obtained by applying obvious rotations on $L^{\infty}$ and by applying the 
abovementioned averaging sliding hump argument. 

Let $M_{\tilde{\rho}}$$\in \R^{n\times n}$ be the matrix corresponding to $T_{\tilde{\rho}}$. The absolute values of reals on each row sum up to $1$ at most. This matrix can be thought of as a linear operator on the finite-dimensional space of 
simple functions determined by the partition $\rho$. We must show that $M_{\tilde{\rho}}$ can be approximated by taking averages of matrix representations of operators of the form \eqref{eq: fsum}. In doing this, we may assume without loss of generality that each entry of $M_{\tilde{\rho}}$ is of the form $\pm k/n$ with $k\in \{0,1,\ldots, n\}$, since $n$ can be taken to be large. 

We form a '$3$-dimensional matrix' $N\colon \{1,\ldots, n\}^3 \to \R$ as follows: For each $i$ we put 
$N(i,1,m)=1$ for $m\leq |k_{i,1}|$ and $N(i,1,m)=0$ for $m>|k_{i,1}|$ where $k_{i,1}/n=M_{\tilde{\rho}}(i,1)$. In general, 
we define $N(i,j,m)=1$ for $\sum_{l=1}^{j-1}|k_{i,l}|<m\leq \sum_{l=1}^{j}|k_{i,l}|$ and $0$ for other values of $m$. 
Then the approximation for $T_{\tilde{\rho}}$ is obtained as the average
\[x\mapsto \frac{1}{n}\sum_{m=1}^{n} \sum_{i,j}N(i,j,m) \E(x\ | C_{j}) 1_{C_{i}},\quad x\in L^{\infty}\] 
of operators, which can be realized in the form \eqref{eq: fsum}.

We conclude that $T\in \overline{\conv}^{\mathrm{SOT}}(RS:\ R)$. Since the rotations $R$ applied
preserve $\mathcal{I}$ and $\mathcal{A}$, this means that $\mathcal{A}\mod \mathcal{I}$ is $\mathrm{SOT}$-convex-transitive.
\end{proof}

We do not know if the Calkin algebra $\mathcal{C}(L^{\infty})=\mathcal{B}(L^{\infty})\mod \mathcal{K}(L^{\infty})$ is ($\mathrm{SOT}$-)convex-transitive. We note that the above ideals $\mathcal{A}$ and $\mathcal{I}$
are in fact $u$-ideals, as the following result shows. 

\begin{theorem}\label{thm: Mideal}
Let $\X$ be a unital Banach algebra with a sequence of idempotents $\pi_{i},\ i\in \N,$
such that $\|\pi_{i}\|=\|1_{\X}-2\pi_{i}\|=1$ for $i$ and $\pi_{i}\pi_{j}=\pi_{j}\pi_{i}=\pi_{j}$ for $j>i$.
Consider the left ideal
\[\mathcal{I}=\{x\in \X:\ \lim_{i\to \infty}\|x\pi_{i}\|=0\}\subset \X.\]
Then $\mathcal{I}$ is a $u$-ideal.
\end{theorem}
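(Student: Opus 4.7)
The plan is to construct the reflection projection on $\X^{**}$ by duality. Specifically, I aim to produce a contractive linear projection $Q\colon \X^{*}\to\X^{*}$ with range $\mathcal{I}^{\bot}$ and $\|\I-2Q\|\leq 1$; then $P:=\I-Q^{*}\colon \X^{**}\to\X^{**}$ is a projection whose range is $(\mathrm{range}\,Q)^{\bot}=\mathcal{I}^{\bot\bot}$ and which satisfies $\|\I-2P\|=\|\I-2Q\|\leq 1$, showing that $\mathcal{I}$ is a $u$-ideal of $\X$.

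The building blocks are the right-multiplication operators $R_{\pi_{i}}(x)=x\pi_{i}$, which are contractive projections on $\X$ with $\|\I-2R_{\pi_{i}}\|=\|1_{\X}-2\pi_{i}\|=1$, so their adjoints $R_{\pi_{i}}^{*}$ are contractive projections on $\X^{*}$ of the same reflection norm. The nesting hypothesis $\pi_{i}\pi_{j}=\pi_{j}$ for $j>i$ yields two crucial facts. First, $(1_{\X}-\pi_{i})\pi_{j}=\pi_{j}-\pi_{j}=0$ for $j>i$, hence $x(1_{\X}-\pi_{i})\in\mathcal{I}$ for every $x\in\X$; consequently, every $\phi\in\mathcal{I}^{\bot}$ satisfies $\phi(x)=\phi(x\pi_{i})$, i.e.\ $R_{\pi_{i}}^{*}\phi=\phi$. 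Second, writing $\X_{i}:=\ker R_{\pi_{i}}$, the sequence $(\X_{i})$ is increasing ($x\pi_{i}=0$ forces $x\pi_{j}=x\pi_{i}\pi_{j}=0$ for $j>i$) and $\mathcal{I}=\overline{\bigcup_{i}\X_{i}}$, because $x-R_{\pi_{i}}(x)\in\X_{i}$ norm-approximates any $x\in\mathcal{I}$. Dualising, $\bigcap_{i}\X_{i}^{\bot}=\mathcal{I}^{\bot}$, and since $R_{\pi_{i}}^{*}(\phi)$ visibly annihilates $\X_{i}$, one has $R_{\pi_{i}}^{*}(\phi)\in\X_{i}^{\bot}$ for every $\phi$.

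I would then fix a free ultrafilter $\mathcal{U}$ on $\N$ and define $Q\phi:=w^{*}\text{-}\lim_{i,\mathcal{U}}R_{\pi_{i}}^{*}(\phi)$, with existence guaranteed by $w^{*}$-compactness. Linearity and $\|Q\|\leq 1$ are immediate. For any fixed $j$ the tail $\{R_{\pi_{i}}^{*}(\phi):i\geq j\}$ lies in the $w^{*}$-closed space $\X_{j}^{\bot}$, so $Q\phi\in\bigcap_{j}\X_{j}^{\bot}=\mathcal{I}^{\bot}$; on the other hand $R_{\pi_{i}}^{*}\psi=\psi$ for $\psi\in\mathcal{I}^{\bot}$ immediately gives $Q\psi=\psi$ on $\mathcal{I}^{\bot}$. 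Hence $Q$ is a projection onto $\mathcal{I}^{\bot}$ (and in particular $Q^{2}=Q$, since $Q\phi\in\mathcal{I}^{\bot}$). The reflection bound $\|\I-2Q\|\leq 1$ then follows from $w^{*}$-lower semicontinuity of the norm on $\X^{*}$ applied to the identity $(\I-2Q)\phi=w^{*}\text{-}\lim_{i,\mathcal{U}}(\I-2R_{\pi_{i}}^{*})\phi$ together with the uniform bound $\|(\I-2R_{\pi_{i}}^{*})\phi\|\leq\|1_{\X}-2\pi_{i}\|\,\|\phi\|=\|\phi\|$.

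I expect no essential topological obstacle; the point deserving care is the twofold role of the identity $\pi_{i}\pi_{j}=\pi_{j}$. Upstairs it yields $\X(1_{\X}-\pi_{i})\subset\mathcal{I}$, which is what makes $R_{\pi_{i}}^{*}$ fix $\mathcal{I}^{\bot}$ pointwise; downstairs it ensures $\X_{i}\subset\X_{j}$ for $j>i$, so that the intersection of the decreasing sequence $(\X_{i}^{\bot})$ is exactly $\mathcal{I}^{\bot}$ and pins the range of $Q$ down to $\mathcal{I}^{\bot}$. Everything else is routine ultrafilter-limit bookkeeping on the first dual, followed by the canonical transfer $Q\mapsto \I-Q^{*}$ to produce the $u$-projection on $\X^{**}$.
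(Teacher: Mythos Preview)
Your proposal is correct and is essentially the paper's own argument. Your projection $Q\phi=w^{*}\text{-}\lim_{i,\mathcal{U}}R_{\pi_{i}}^{*}\phi$ is exactly the paper's $P(x^{*})[x]=\lim_{i,\mathcal{U}}x^{*}(x\pi_{i})$, and the verifications of $Q^{2}=Q$, $\mathrm{Im}\,Q=\mathcal{I}^{\bot}$, and $\|\I-2Q\|\leq 1$ proceed along the same lines; your framing via the increasing kernels $\X_{i}=\ker R_{\pi_{i}}$ and the identity $\bigcap_{i}\X_{i}^{\bot}=\mathcal{I}^{\bot}$ is a slightly cleaner way to pin down the range than the paper's pointwise check, but the substance is identical.
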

\begin{proof}
We will define an isometric reflection projection $P\colon \X^{\ast}\to \mathcal{I}^{\bot}$. 
Then $(\I-P)^{\ast}\colon \X^{\ast\ast}\to \mathcal{I}^{\bot\bot}$ is such as well and we have the claim.

Let $\mathcal{U}$ be a non-principal ultrafilter on $\N$. For all $x^{\ast}\in \X^{\ast}$ and $x\in \X$
we define $P(x^{\ast})[x]=\lim_{i,\mathcal{U}}x^{\ast}(x\pi_{i})$. Clearly $P$ is a bounded linear operator.
Observe that 
\[P(x^{\ast})[x\pi_{n}]=\lim_{i,\mathcal{U}}x^{\ast}(x\pi_{n}\pi_{i})=\lim_{i,\mathcal{U}}x^{\ast}(x\pi_{i})
=P(x^{\ast})[x],\]
for $x\in\X$. Thus $P^{2}(x^{\ast})[x]=\lim_{n,\mathcal{U}}P(x^{\ast})(x\pi_{n})=P(x^{\ast})[x]$, so that $P$ is a projection.

Note that 
\[|(\I-2P)(x^{\ast})[x]|=|\lim_{i,\mathcal{U}}x^{\ast}(x)-x^{\ast}(x2\pi_{i})|
=\lim_{i,\mathcal{U}}|x^{\ast}(x(1_{\X}-2\pi_{i}))|\leq \|x^{\ast}\|\ \|x\|,\]
so that $\|\I-2P\|\leq 1$. The last condition characterizes isometric reflection projections among projections $P$.

Let us verify that $\mathcal{I}^{\bot}$ is the image of $P$. Note that $x(1_{\X}-\pi_{i})\in \mathcal{I}$ for all
$x\in \X$. Thus $x^{\ast}\in \mathcal{I}^{\bot}$ implies that $x^{\ast}(x(1_{\X}-\pi_{i}))=0$ holds for all $x\in \X$ and 
$i\in \N$. This yields $x^{\ast}=P(x^{\ast})$, so that $\mathcal{I}^{\bot}\subset \mathrm{Im}(P)$. 

To check the converse inclusion, pick $x^{\ast}\in \X^{\ast}\setminus \mathcal{I}^{\bot}$. 
Then there is $y\in \mathcal{I}$ such that $x^{\ast}(y)=\delta>0$. However, $P(x^{\ast})[y]=\lim_{n,\mathcal{U}} x^{\ast}(y\pi_{n})=0$ and hence $x^{\ast}\neq P(x^{\ast})$. Thus $\mathrm{Im}(P)\subset \mathcal{I}^{\bot}$.
\end{proof}

\begin{proposition}\label{prop: Mideal}
The left ideal $\mathcal{A}=\mathcal{B}(L^{\infty})\cap \mathcal{I}^{\bot\bot}\subset \mathcal{B}(L^{\infty})$
appearing in Theorem \ref{thm: Calkin} satisfies that for all $T\in \mathcal{A}$ and all $x^{\ast}\in C(\mathfrak{M})^{\ast}$ it holds that $x^{\ast}\circ T \in \mathrm{M}(\mathfrak{M})$ is supported outside of $M$.
\end{proposition}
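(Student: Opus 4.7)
The plan is to translate the support condition on the measure $\mu := x^{\ast} \circ T \in \mathrm{M}(\mathfrak{M})$ into a decay condition on the operators $T \circ 1_{[t,1]} \cdot \I$ as $t \to 1^{-}$, and then to lift the easy case $T \in \mathcal{I}$ to the biduality closure $\mathcal{I}^{\bot\bot}$ via a Hahn--Banach/ultrafilter argument.

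First I would reformulate the goal. Since $M = \bigwedge_{t<1}\neg[0,t]$ equals, under the natural identification, the intersection over $t<1$ of the clopen sets $[t,1] \subset \mathfrak{M}$, regularity of $\mu$ gives $|\mu|(M) = \lim_{t \to 1^{-}} |\mu|([t,1])$. Because $1_{[t,1]}$ is a genuine element of $C(\mathfrak{M}) = L^{\infty}$, the total variation of $\mu$ on this clopen set is computed by testing against functions supported there, which yields
\[|\mu|([t,1]) = \sup_{\|h\|_{\infty} \leq 1} |x^{\ast}(T(1_{[t,1]} h))| \leq \|x^{\ast}\| \cdot \|T \circ 1_{[t,1]} \cdot \I\|_{\mathcal{B}(L^{\infty})}.\]
Thus the statement reduces to showing $\lim_{t \to 1^{-}} \|x^{\ast} \circ T \circ 1_{[t,1]} \cdot \I\| = 0$. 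For $T \in \mathcal{I}$ this is immediate from the definition of $\mathcal{I}$ combined with the bound above.

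For general $T \in \mathcal{A} = \mathcal{B}(L^{\infty}) \cap \mathcal{I}^{\bot\bot}$ I would argue by contradiction. Suppose $\alpha := \lim_{t \to 1^{-}} |\mu|([t,1]) > 0$. Pick $t_{n} \nearrow 1$ and $h_{n} \in L^{\infty}$ with $\|h_{n}\|_{\infty} \leq 1$ such that, after adjusting signs, $x^{\ast}(T(1_{[t_{n},1]} h_{n})) \to \alpha$. Fixing a non-principal ultrafilter $\mathcal{U}$ on $\N$, define
\[\phi(S) = \lim_{n,\mathcal{U}} x^{\ast}\bigl(S(1_{[t_{n},1]} h_{n})\bigr), \qquad S \in \mathcal{B}(L^{\infty}).\]
This is a bounded linear functional of norm at most $\|x^{\ast}\|$. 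For any $S \in \mathcal{I}$ the estimate $|x^{\ast}(S(1_{[t_{n},1]} h_{n}))| \leq \|x^{\ast}\| \cdot \|S \circ 1_{[t_{n},1]} \cdot \I\| \to 0$ forces $\phi|_{\mathcal{I}} = 0$, so $\phi \in \mathcal{I}^{\bot}$. But $\phi(T) = \alpha \neq 0$, contradicting the biduality characterization that $T \in \mathcal{I}^{\bot\bot}$ exactly when $\phi(T) = 0$ for every $\phi \in \mathcal{I}^{\bot}$.

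The step I expect to require the most care is the total-variation identity $|\mu|([t,1]) = \sup_{\|h\|_{\infty} \leq 1}|\mu(1_{[t,1]} h)|$, which rests on $[t,1]$ being clopen in the Stone space $\mathfrak{M}$ and on the Riesz identification $C(\mathfrak{M})^{\ast} = \mathrm{M}(\mathfrak{M})$; once this is granted, both the direct case $T \in \mathcal{I}$ and the ultrafilter lift to $\mathcal{I}^{\bot\bot}$ are standard biduality considerations.
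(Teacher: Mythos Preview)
Your argument is correct and follows the same overall strategy as the paper: construct a functional $\phi\in\mathcal{I}^{\bot}$ with $\phi(T)\neq 0$, contradicting $T\in\mathcal{I}^{\bot\bot}$. The implementations differ slightly. The paper invokes the isometric reflection projection $P\colon\mathcal{B}(L^{\infty})^{\ast}\to\mathcal{I}^{\bot}$ from Theorem~\ref{thm: Mideal} and takes as its witness the functional $F(S)=(x_{0}^{\ast}\circ S)(A)$ for a Borel set $A\subset M$ with $\mu(A)\neq 0$; the observation $P(F)=F$ (which follows since $A\subset M\subset[1-2^{-n},1]$ for all $n$) places $F$ in $\mathcal{I}^{\bot}$. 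You instead stay entirely within $C(\mathfrak{M})$, first reducing the support statement to $\lim_{t\to 1^{-}}|\mu|([t,1])=0$ via regularity and the clopen total-variation identity, and then building $\phi$ directly as an ultrafilter limit along continuous test functions $1_{[t_{n},1]}h_{n}$ witnessing the variation. Your route is more self-contained (it does not lean on Theorem~\ref{thm: Mideal} or on evaluating Riesz measures at Borel subsets of $M$), at the cost of the extra paragraph establishing the total-variation reformulation; the paper's route is shorter but presupposes the projection machinery already in place.
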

\begin{proof}
We will apply the proof of the previous theorem. Let $P\colon \mathcal{B}(L^{\infty})^{\ast}\to \mathcal{I}^{\bot}$,
$P(x^*)[T]=\lim_{n,\mathcal{U}}x^{\ast}[T\circ 1_{[1-2^{-n},1]}\cdot \I ]$. Suppose that $x_{0}^* \in (L^{\infty})^* $
and $T_{0}\in \mathcal{A}$. 

If $\mu =x_{0}^{\ast}\circ T_{0} \in C(\mathfrak{M})^{\ast}=\mathrm{M}(\mathfrak{M})$ has a strictly positive variation on $M$ then we may select a Borel set $A\subset M$ such that $\mu(A)=\delta\neq 0$. Define $F\in\mathcal{B}(C(\mathfrak{M}))^{\ast}$ by setting $F(T)=\nu_{T}(A)$ where $\nu_{T} \in \mathrm{M}(\mathfrak{M})$ is given by $\nu_{T}=x_{0}^{\ast}\circ T\in \mathrm{M}(\mathfrak{M})$. Observe that $P(F)=F$, so that $F\in \mathcal{I}^{\bot}$. 
Now, $F(T_{0})=\delta$. Thus $T_{0}\notin \mathcal{I}^{\bot\bot}$.
\end{proof}

\subsection{Cabello's transitive abstract M-space}

Recall that the non-trivial ultraproduct of countably many almost transitive spaces is transitive, 
see e.g. \cite{BR2} for discussion. In \cite{Ca4} Cabello studied a particular case of this, namely, an abstract $\mathrm{M}$-space, which is defined as follows:
Putting 
\[\Y=\bigoplus_{i=1}^{\infty}L^{i}\quad (\mathrm{direct\ sums\ in}\ \ell^{\infty}-\mathrm{sense})\]
and $N_{\mathcal{U}}=\{(y_{i})\in \Y:\ \lim_{i,\mathcal{U}}\|y_{i}\|=0\}$, where $\mathcal{U}$ is a non-principal ultrafilter on $\N$, yields a transitive quotient space $\mathcal{M}=\Y\mod N_{\mathcal{U}}$. 

The space $\mathcal{M}$ can be endowed with the Banach lattice structure by defining 
$x\vee y = [ \{\max(x_{i},y_{i})\}_{i\in\N} ]$ and $x\wedge  y = [ \{\min(x_{i},y_{i})\}_{i\in\N} ]$
for $x,y\in \mathcal{M}$ where the maximum and the minimum are taken pointwise almost everywhere in $L^{i}$
for $i\in \N$. Let us write $|x|_{\mathcal{M}}=x\vee -x$ for $x\in \mathcal{M}$.
In fact this construction produces an abstract $\mathrm{M}$-space as observed in \cite{Ca4} and this was later exploited in \cite{Ca6} in constructing almost transitive spaces of the type $C(K,\X)$.
By substituting the spaces $L^{i}$ with $L^{\infty}$ we obtain an ultrapower of $L^{\infty}$ and we will denote it by
$(L^{\infty})^{\mathcal{U}}$. Let $J\colon (L^{\infty})^{\mathcal{U}} \to \mathcal{M}$ be the canonical identification 
$[(x_{i})]_{(L^{\infty})^{\mathcal{U}}} \mapsto [(x_{i})]_{\mathcal{M}}$, which is non-expansive but not injective or onto.

Next we will make a digression and consider $\mathcal{M}$ as a special Banach module. We define $\cdot$ to be a bilinear operation $(L^{\infty})^{\mathcal{U}}\times \mathcal{M}\to \mathcal{M}$ such that 
\begin{enumerate}
\item[(a)]{$(\mathcal{M},\cdot)$ is a unital commutative Banach $(L^{\infty})^{\mathcal{U}}$-bimodule.}
\item[(b)]{$(ab)\cdot 1_{\mathcal{M}}=a\cdot J(b)$ for $a,b\in (L^{\infty})^{\mathcal{U}}$.}
\item[(c)]{$\|a\cdot x\|\leq \|a\|\ \|x\|$ for $a\in (L^{\infty})^{\mathcal{U}},\ x\in \mathcal{M}$.}
\item[(d)]{$|a\cdot x|_{\mathcal{M}}=|a|_{(L^{\infty})^{\mathcal{U}}} \cdot |x|_{\mathcal{M}}$ for $a\in (L^{\infty})^{\mathcal{U}},\ x\in \mathcal{M}$.}
\item[(e)]{$|a\cdot x|_{\mathcal{M}}=0$ if and only if $|J(a)|_{\mathcal{M}} \wedge |x|_{\mathcal{M}}=0$ for $a\in (L^{\infty})^{\mathcal{U}},\ x\in \mathcal{M}$.}
\end{enumerate}

Indeed, given representatives $(a_{i})\in \ell^{\infty}(L^{\infty})$ and $(x_{i})\in \bigoplus L^{i}$ of 
$a\in (L^{\infty})^{\mathcal{U}}$ and $x\in \mathcal{M}$, respectively, we define $a\cdot x$ as 
$[(a_{i}x_{i})]\in \mathcal{M}$, and it is straight-forward to check that the operation $\cdot$ is well defined.
On the other hand, this type of operation $ab\mapsto [(a_{i}b_{i})]$ applied on $(L^{\infty})^{\mathcal{U}}$ makes it a unital commutative Banach algebra.

The conditions (a)-(d) are easily verified, and next we will check (e). Indeed, fix representatives $(a_{i})\in \ell^{\infty}(L^{\infty})$ and $(x_{i})\in \bigoplus L^{i}$ of $a\in (L^{\infty})^{\mathcal{U}}$ and $x\in \mathcal{M}$, respectively.
Note that $|a\cdot x|_{\mathcal{M}}=0$ if and only if $\lim_{i,\mathcal{U}}\|\ |a_{i}x_{i}|\ \|_{L^{i}}=0$
and then $\lim_{i,\mathcal{U}}\|(|a_{i}|\wedge |x_{i}|)^{2} \|_{L^{i}}=0$, which implies 
$\lim_{i,\mathcal{U}}\|\ |a_{i}|\wedge |x_{i}|\ \|_{L^{i}}=0$. In the opposite direction, we may assume that the representatives are chosen in such a way that $|a_{i}|\wedge |x_{i}|=0$ for $i$, in which case it is clear that $a\cdot x=0$.

Define a space $\mathcal{M}(\X)$ by using Bochner spaces $L^{i}(\X)$ in place of $L^{i}$ in the construction of 
$\mathcal{M}$. Thus, this is a natural 'vector-valued version' of $\mathcal{M}$.
\begin{theorem}
If $\X$ is almost transitive, then $\mathcal{M}(\X)$ is transitive. If $\X$ is uniformly convex-transitive, then
$\mathcal{M}(\X)$ and $\X^{\mathcal{U}}$ are uniformly convex-transitive. In particular, $(L^{\infty})^{\mathcal{U}}$
is uniformly convex-transitive.  
\end{theorem}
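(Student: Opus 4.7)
The plan is to transfer the hypothesis on $\X$ down to the component spaces $L^{i}(\X)$ (resp.\ the factors of $\X^{\mathcal{U}}$) and then lift back through the ultraproduct quotient. The key preliminary observation is that any sequence $(T_{i})$ with $T_{i}\in \mathcal{G}_{L^{i}(\X)}$ induces an isometric automorphism $T=[(T_{i})]$ of $\mathcal{M}(\X)$ via $[(z_{i})]\mapsto [(T_{i}z_{i})]$: each $T_{i}$ being an isometry ensures that $N_{\mathcal{U}}$ is preserved and $\|T\,[(z_{i})]\|=\lim_{i,\mathcal{U}}\|T_{i}z_{i}\|=\lim_{i,\mathcal{U}}\|z_{i}\|$. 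The same observation applies verbatim to $\X^{\mathcal{U}}$ with $T_{i}\in\mathcal{G}_{\X}$.

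For the transitivity claim, I would invoke the classical fact that $L^{p}(\mu,\X)$ is almost transitive whenever $\X$ is and $p\in[1,\infty)$, so each $L^{i}(\X)$ is almost transitive. Fix $x,y\in\S_{\mathcal{M}(\X)}$ with representatives $(x_{i}),(y_{i})$; since $\lim_{i,\mathcal{U}}\|x_{i}\|=\lim_{i,\mathcal{U}}\|y_{i}\|=1$, replacing each representative by its pointwise normalization on the $\mathcal{U}$-large set where the norms exceed $1/2$ (and by $0$ elsewhere) yields an equivalent representative modulo $N_\mathcal{U}$ with $\|x_{i}\|=\|y_{i}\|=1$ on a common $U\in\mathcal{U}$. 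For $i\in U$ pick $T_{i}\in\mathcal{G}_{L^{i}(\X)}$ with $\|T_{i}x_{i}-y_{i}\|<1/i$; the induced $T\in\mathcal{G}_{\mathcal{M}(\X)}$ then satisfies $\|Tx-y\|=\lim_{i,\mathcal{U}}\|T_{i}x_{i}-y_{i}\|=0$, so $Tx=y$ exactly.

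For uniform convex-transitivity, I would first secure that if $\X$ has uniform convex-transitivity constants $K_{\X}(\cdot)$ then $L^{p}(\X)$ is uniformly convex-transitive for every $p\in[1,\infty]$ with a constant $K^{*}(\epsilon)$ depending only on $\epsilon$ and $K_{\X}$, in particular independent of $p$; this is obtained by the usual scheme of reducing to simple functions, applying uniform convex-transitivity of the scalar $L^{p}$ (as in \cite{RT}) together with coordinatewise uniform convex-transitivity of $\X$. Applied coordinatewise, for $x,y\in\S_{\mathcal{M}(\X)}$ and $\epsilon>0$ this produces, for each $i$ in a $\mathcal{U}$-large set, isometries $T_{1}^{(i)},\ldots,T_{K}^{(i)}\in\mathcal{G}_{L^{i}(\X)}$ with $K=K^{*}(\epsilon)$ and weights $a_{1}^{(i)},\ldots,a_{K}^{(i)}\in[0,1]$, summing to $1$, such that
\begin{equation*}
\Bigl\|y_{i}-\sum_{k=1}^{K}a_{k}^{(i)}T_{k}^{(i)}x_{i}\Bigr\|\leq \epsilon.
\end{equation*}
By compactness of $[0,1]^{K}$, set $a_{k}=\lim_{i,\mathcal{U}}a_{k}^{(i)}$; replacing $a_{k}^{(i)}$ by $a_{k}$ introduces at most $\epsilon$ extra error on a further $\mathcal{U}$-large set, and passing to the $\mathcal{U}$-limit yields $\|y-\sum_{k}a_{k}T_{k}x\|\leq 2\epsilon$ in $\mathcal{M}(\X)$ with $T_{k}=[(T_{k}^{(i)})]\in\mathcal{G}_{\mathcal{M}(\X)}$. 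The same argument with $\X$ in place of $L^{i}(\X)$ handles $\X^{\mathcal{U}}$, and the ``in particular'' follows by specializing to $\X=L^{\infty}$, itself uniformly convex-transitive by \cite{RT}.

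The principal obstacle is the uniform-in-$p$ control of the uniform convex-transitivity constants of $L^{p}(\X)$; once this constant is shown to depend only on $\epsilon$ and $K_{\X}$, the ultrafilter-limit device closes the argument cleanly. For $\X^{\mathcal{U}}$ this obstacle does not arise, since the component space is $\X$ itself and the hypothesized constants are available immediately.
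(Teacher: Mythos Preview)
Your proposal is correct and follows essentially the same approach as the paper: reduce to the (equi-)uniform convex-transitivity of the component spaces $L^{i}(\X)$, normalize representatives along a $\mathcal{U}$-large set, apply the coordinatewise approximations, and lift through the ultraproduct quotient using product rotations $[(T_{i})]$. Your compactness argument on $[0,1]^{K}$ to align the convex-combination weights across coordinates is a clean way to handle a point the paper leaves implicit; the paper instead asserts (via the ``one outer rotation plus inner rotations'' scheme) that $L^{p}(\X)$ inherits the \emph{same} constant $K(\epsilon)$ as $\X$ for $p<\infty$, which makes the equal-weight version immediate, whereas you only need the weaker uniform-in-$p$ bound $K^{*}(\epsilon)$ together with the $\mathcal{U}$-limit of the weights.
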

\begin{proof}
The argument follows the considerations in \cite{GJK}, \cite{Ca4} and \cite{RT}. Namely, the Bochner spaces 
$L^{p}(L^{r})$ are almost transitive for $p,r<\infty$ (see e.g. \cite{GJK}), and it is well known that the ultraproduct of such spaces is transitive (see e.g. \cite{Ca4}). 

On the other hand, one can easily check that in the space
\[\Y=\bigoplus_{n\in \N}L^{n}(\X),\quad \mathrm{(summation\ in}\ \ell^{\infty}\mathrm{-sense)},\]
given two elements $x=(x_{n})_{n\in\N}$ and $y=(y_{n})_{n\in\N}$ such that $\|x_{n}\|_{L^{n}(\X)}=\|y_{n}\|_{L^{n}(\X)}$ for $n\in\N$,
we have $x\in\overline{\conv}(R(y):\ R)$ with rotations $R=\Pi R_{n}$, where $R_{n}\in \mathcal{G}_{L^{n}(\X)}$. Here we apply the fact that the spaces $L^{n}(\X)$ are 'equi-convex-transitive', that is, 
they have a common finite upper bound $C_{\epsilon}$ for constants of uniform convex-transitivity depending on $\epsilon$.

Actually, this follows from the fact that the spaces $L^{p}(\X),\ p<\infty,$ have the same constant $K(\epsilon)$ as $\X$ itself. Indeed, the convex-transitivity of $L^{p}(\X)$ can be established by applying an approximation with an outer rotation of $L^{p}$ and convex combinations of inner rotations of $\X$ via simple functions, exploiting similar ideas as in \cite{GJK} and \cite{RT}. Since the simple functions are dense, and we applied approximation with \emph{one} outer rotation, their use will not affect the value of $K(\epsilon)$. 

The argument is completed by picking suitable representatives for $\hat{x},\hat{y}\in \S_{\mathcal{M}(\X)}$, as above. The existence of such representatives follows by inspecting the properties of the 
ultralimit, since $\lim_{n,\mathcal{U}}\|x_{n}\|=\lim_{n,\mathcal{U}}\|y_{n}\|=1$. Then we apply the above approximation of $x$. The rotations $R$ used above fix $N_{\mathcal{U}}$, so that we have the claim.

The statement regarding $\X^{\mathcal{U}}$ follows by similar reasoning.
\end{proof}

The above argument yields that if $(\Y_{n})$ is a sequence of Banach spaces that is equi-convex-transitive in the above 
sense, then the corresponding ultraproduct is uniformly convex-transitive as well. 

\section{Final remarks}

We do not know what is the relationship (if any) between the following two conditions of a Banach space 
$\X$:
\begin{enumerate}
\item[(i)]{$(\X,\|\cdot\|)$ has a \emph{maximal} norm (\cite{Wo}), that is, for any equivalent norm $|||\cdot|||$ on $\X$ the condition $\mathcal{G}_{(\X,\|\cdot\|)}\subset \mathcal{G}_{(\X,|||\cdot |||)}$ implies that $\mathcal{G}_{(\X,\|\cdot\|)}=\mathcal{G}_{(\X,|||\cdot |||)}$.}
\item[(ii)]{$\mathcal{G}_{(\X,\|\cdot\|)}$ does not admit any non-trivial invariant subspace.}
\end{enumerate}

\subsection{Convex-transitivity of tensor products}

Recall that the projective tensor norm $||\cdot||_{\pi}$ on $\X\bigotimes \Y$ is defined as follows:
\[||u||_{\pi}=\inf \left\{\sum_{i}||x_{i}||_{\X}||y_{i}||_{\Y}:\ u=\sum_{i}x_{i}\otimes y_{i}\right\}.\]
Then $\X\widehat{\bigotimes}_{\pi}\Y$ is the completion of the space $\X\bigotimes \Y$ in the $||\cdot||_{\pi}$ norm.

It is a natural question to ask whether convex-transitivity is preserved in taking tensor products and this problem implicitly appeared in \cite{RT}.

\begin{lemma}\label{lm: tensorbigpoint}
Let $x\in \S_{\X}$ and $y\in \S_{\Y}$ be big points. Then $x\otimes y$ is a big point in $\X\widehat{\bigotimes}_{\pi}\Y$.
\end{lemma}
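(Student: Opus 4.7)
My plan is to reduce the statement to two standard ingredients: the functoriality of the projective tensor product with respect to isometries, and the density of convex combinations of unit elementary tensors in $\B_{\X\widehat{\bigotimes}_{\pi}\Y}$. The first I would use to transport the bigness hypotheses from $x$ and $y$ into the tensor product, and the second to pass from elementary tensors to a general element of the unit ball.

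Concretely, I would first recall that for $T\in\mathcal{G}_{\X}$ and $S\in\mathcal{G}_{\Y}$ the algebraic tensor $T\otimes S$ extends to an element of $\mathcal{G}_{\X\widehat{\bigotimes}_{\pi}\Y}$; this is a standard consequence of the maximality of $\|\cdot\|_{\pi}$ among reasonable crossnorms, and the inverse is $T^{-1}\otimes S^{-1}$. In particular the operators $T\otimes \I$ and $\I\otimes S$ lie in $\mathcal{G}_{\X\widehat{\bigotimes}_{\pi}\Y}$. Then, using that $y$ is a big point, for each $y^{\prime}\in\B_{\Y}$ I can write $y^{\prime}=\lim_{n}\sum_{j}b_{j}^{(n)}S_{j}^{(n)}(y)$ with convex coefficients and $S_{j}^{(n)}\in\mathcal{G}_{\Y}$, so the projective norm bound $\|x\otimes(y^{\prime}-\sum_{j}b_{j}^{(n)}S_{j}^{(n)}(y))\|_{\pi}\leq\|y^{\prime}-\sum_{j}b_{j}^{(n)}S_{j}^{(n)}(y)\|$ gives
\[
x\otimes y^{\prime}\in\overline{\conv}\{(\I\otimes S)(x\otimes y):S\in\mathcal{G}_{\Y}\}\subset\overline{\conv}(\mathcal{G}_{\X\widehat{\bigotimes}_{\pi}\Y}(x\otimes y)).
\]
Applying the same idea with $x$ in place of $y$, together with the fact that $(T\otimes\I)$ maps the closed convex hull of the orbit of $x\otimes y$ into itself, yields $x^{\prime}\otimes y^{\prime}\in\overline{\conv}(\mathcal{G}_{\X\widehat{\bigotimes}_{\pi}\Y}(x\otimes y))$ for every $x^{\prime}\in\B_{\X}$ and $y^{\prime}\in\B_{\Y}$.

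To finish, I would invoke the definition of the projective norm: every $u\in\B_{\X\widehat{\bigotimes}_{\pi}\Y}$ admits, for any $\varepsilon>0$, a representation $u=\sum_{i}x_{i}\otimes y_{i}$ with $\sum_{i}\|x_{i}\|\,\|y_{i}\|<1+\varepsilon$. Renormalising each factor and collecting the scalars $\lambda_{i}=\|x_{i}\|\,\|y_{i}\|$ exhibits $(1+\varepsilon)^{-1}u$ as arbitrarily close to a convex combination of unit elementary tensors $\tilde{x}_{i}\otimes\tilde{y}_{i}$, each of which lies in $\overline{\conv}(\mathcal{G}_{\X\widehat{\bigotimes}_{\pi}\Y}(x\otimes y))$ by the previous paragraph. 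Letting $\varepsilon\to 0$ gives $u\in\overline{\conv}(\mathcal{G}_{\X\widehat{\bigotimes}_{\pi}\Y}(x\otimes y))$, which is exactly the bigness of $x\otimes y$.

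The step I expect to cause the only real friction is the first one, namely verifying that the algebraic isometries $T\otimes S$ really do extend isometrically to the projective completion; I would cite this rather than reprove it, since it is folklore. Everything else is a straightforward play with the triangle inequality for $\|\cdot\|_{\pi}$ and the fact that $\overline{\conv}$ is idempotent.
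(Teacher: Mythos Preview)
Your proposal is correct and follows essentially the same route as the paper's proof: reduce to unit elementary tensors via the projective norm, then reach an arbitrary $v\otimes w$ from $x\otimes y$ in two stages using isometries of the form $\I\otimes S$ and $T\otimes\I$. If anything, you are slightly more explicit than the paper in justifying the second stage (that $(T\otimes\I)$ preserves $\overline{\conv}(\mathcal{G}(x\otimes y))$, which is what lets you iterate), whereas the paper compresses this into a ``similarly''.
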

\begin{proof}
By density considerations it suffices to check that elements of the form $u=\sum_{i}x_{i}\otimes y_{i}$ are contained in 
$\overline{\conv}(\mathcal{G}_{\X\widehat{\bigotimes}_{\pi}\Y}(x\otimes y))$, where $||y_{i}||=1$ for $i$ and 
$\sum_{i}||x_{i}||=1$. Moreover, since the set $\overline{\conv}(\mathcal{G}_{\X\widehat{\bigotimes}_{\pi}\Y}(x\oplus y))$ is invariant under taking convex combinations, it suffices to check that $\{v\otimes w:\ (v,w)\in \S_{\X}\times \S_{\Y}\}$ is contained in $\overline{\conv}(\mathcal{G}_{\X\widehat{\bigotimes}_{\pi}\Y}(x\otimes y))$. By using the fact that $y$ is a big point, we get that $x\otimes w\in \overline{\conv}(\{(\I_{\X}\otimes T)(x\otimes y):\ T\in \mathcal{G}_{\Y}\})$. Similarly, we get that $v\otimes w\in \overline{\conv}(\{(T\otimes \I_{\Y})(x\otimes y):\ T\in \mathcal{G}_{\X}\})$, which gives the claim.
\end{proof}

For convenience we will label a property of normed spaces. We say that $\X$ is $L$-like if the following condition is satisfied: For each $\epsilon>0$, and $x_{1},x_{2},\ldots,x_{n}\in \X$ there are $z_{1},z_{2},\ldots,z_{k}\in\X$ and $L^{1}$-projections 
$P_{1},P_{2},\ldots , P_{k}$ such that 
\[\dist(x_{m}, \span(z_{i}: 1\leq i\leq k))<\epsilon,\]
and $P_{i}z_{i}-z_{i}=P_{i}z_{j}=0$ for $1\leq i\leq k,\ i\neq j$. Recall the well-known fact that $L^{1}$ projections commute.

An example of an $L$-like almost transitive space, apart from $L^{1}$, is its non-complete subspace of functions $f$ 
satisfying $\lim_{t\to 1}f(t)=0$.

\begin{theorem}
Let $\X$ and $\Y$ be convex-transitive normed spaces, and suppose that $\X$ is additionally $L$-like. Then
$\X\widehat{\bigotimes}_{\pi}\Y$ is convex transitive.
\end{theorem}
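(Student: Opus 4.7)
The plan is to combine Lemma \ref{lm: tensorbigpoint} with the $L^{1}$-structure of $\X$ transferred to $\X\widehat{\bigotimes}_{\pi}\Y$. Two preliminary observations will be used throughout. First, the set of big points of a normed space $\Z$ is closed in $\S_{\Z}$: if $u_{n}\to u$ with each $u_{n}$ big, a three-$\varepsilon$ argument on convex combinations of orbit elements shows every $v\in\B_{\Z}$ lies in $\overline{\conv}(\mathcal{G}_{\Z}(u))$. Second, if $\overline{\conv}(\mathcal{G}_{\Z}(u))$ contains \emph{even one} big point $w$, then $u$ is itself a big point; for then $Sw\in \overline{\conv}(\mathcal{G}_{\Z}(u))$ for every isometry $S$, whence $\B_{\Z}=\overline{\conv}(\mathcal{G}_{\Z}(w))\subset \overline{\conv}(\mathcal{G}_{\Z}(u))$.

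Fix $u\in \S_{\X\widehat{\bigotimes}_{\pi}\Y}$, $\varepsilon>0$, and a representation $u=\sum_{i=1}^{N}x_{i}\otimes y_{i}$ with $\|y_{i}\|=1$ and $\sum\|x_{i}\|$ close to $1$. Apply the $L$-like property to $\{x_{i}\}$ to obtain $z_{1},\dots,z_{k}\in \X$ and $L^{1}$-projections $P_{1},\dots,P_{k}$ with $P_{j}z_{j}=z_{j}$ and $P_{j}z_{i}=0$ for $i\ne j$. A short computation shows that commuting $L^{1}$-projections are closed under products, so the $\tilde P_{j}=P_{j}\prod_{i\ne j}(\I-P_{i})$ form a pairwise orthogonal family of $L^{1}$-projections with the same action on the $z_{i}$; I will assume this orthogonality from the start. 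Substituting $x_{i}\approx \sum_{j}c_{ij}z_{j}$ produces an approximant $\tilde u=\sum_{j}z_{j}\otimes \tilde y_{j}$ of $u$, with $z_{j}\in \X_{j}:=P_{j}\X$ and the $\X_{j}$ mutually $L^{1}$-independent subspaces of $\X$.

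The structural fact I lean on is that tensoring an $L^{1}$-projection with the identity preserves the $L^{1}$-property. Indeed, for any $L^{1}$-projection $P$ on $\X$ and any representation $u=\sum x_{i}\otimes y_{i}$, the identity $\|x_{i}\|=\|Px_{i}\|+\|(\I-P)x_{i}\|$ together with the triangle inequality on $\|\cdot\|_{\pi}$ gives $\|(P\otimes\I_{\Y})u\|_{\pi}+\|((\I-P)\otimes \I_{\Y})u\|_{\pi}\le \sum\|x_{i}\|\|y_{i}\|$; taking infima over representations yields equality, so $P\otimes \I_{\Y}$ is an $L^{1}$-projection on $\X\widehat{\bigotimes}_{\pi}\Y$. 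Consequently $\X\widehat{\bigotimes}_{\pi}\Y$ is an $\ell^{1}$-direct sum of the summands $\X_{j}\widehat{\bigotimes}_{\pi}\Y$ ($1\le j\le k$) and $R\widehat{\bigotimes}_{\pi}\Y$ with $R=(\I-\sum_{j}P_{j})\X$; for any isometries $T_{j}\in\mathcal{G}_{\Y}$, the block-diagonal operator acting as $\I_{\X_{j}}\otimes T_{j}$ on the $j$-th summand and identically on $R\widehat{\bigotimes}_{\pi}\Y$ is then an isometry of $\X\widehat{\bigotimes}_{\pi}\Y$. Applied to $\tilde u$, which has no $R$-component, it produces $\sum_{j}z_{j}\otimes T_{j}\tilde y_{j}$.

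Fix any $v\in \S_{\Y}$; by convex-transitivity of $\Y$, for each $j$ with $\tilde y_{j}\ne 0$ the unit vector $\tilde y_{j}/\|\tilde y_{j}\|$ is a big point of $\Y$, so $v$ can be approximated by convex combinations $\sum_{\ell}a^{j}_{\ell}T^{j}_{\ell}(\tilde y_{j}/\|\tilde y_{j}\|)$ with $T^{j}_{\ell}\in\mathcal{G}_{\Y}$. Taking the product-convex-combination of the corresponding block-diagonal isometries (weights $\prod_{j}a^{j}_{\ell_{j}}$) yields an element of $\conv(\mathcal{G}(\tilde u))$ of the form $\sum_{j}z_{j}\otimes \sum_{\ell}a^{j}_{\ell}T^{j}_{\ell}\tilde y_{j}$, arbitrarily close to $z\otimes v$, where $z=\sum_{j}\|\tilde y_{j}\|z_{j}$. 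The $L^{1}$-independence of the $z_{j}$ gives $\|z\|=\sum_{j}\|\tilde y_{j}\|\|z_{j}\|=\|\tilde u\|$, so $(z/\|z\|)\otimes v$ is an elementary tensor of two unit vectors, hence a big point of $\X\widehat{\bigotimes}_{\pi}\Y$ by Lemma \ref{lm: tensorbigpoint}. This big point lies in $\overline{\conv}(\mathcal{G}(\tilde u/\|\tilde u\|))$, so the preliminary observation forces $\tilde u/\|\tilde u\|$ to be a big point; letting $\varepsilon\to 0$ and using closedness of the big-point set, $u$ is a big point. The main obstacle I expect is making the $L^{1}$-lifting statement rigorous and carefully verifying that the block-diagonal operators are isometries of $\X\widehat{\bigotimes}_{\pi}\Y$; once these structural inputs are in hand, the remainder is a direct averaging argument using the convex-transitivity of $\Y$ together with the sub-lemma.
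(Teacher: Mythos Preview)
Your proof is correct and follows essentially the same route as the paper's: both use the $L$-like structure of $\X$ to reduce to a vector supported on $L^{1}$-independent pieces $z_{j}$, apply block-diagonal rotations built from $L^{1}$-projections and elements of $\mathcal{G}_{\Y}$ (the paper writes these as compositions of $(\I_{\X}-P)\otimes\I_{\Y}+P\otimes T$ and checks they are isometries by a direct norm estimate, whereas you obtain the same fact from the cleaner observation that $P\otimes\I_{\Y}$ is itself an $L^{1}$-projection), average using the convex-transitivity of $\Y$ to land on an elementary tensor $z\otimes v$, and finish via Lemma~\ref{lm: tensorbigpoint}. Your two preliminary observations on big points and the explicit $\ell^{1}$-decomposition of the tensor product tidy up the endgame, but the underlying argument is the same.
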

\begin{proof}
First we will verify that for each $L^{1}$-projection $P$ on $\X$ and rotation $T$ on $\Y$ we have that operators of the form $R=(\I_{\X}-P)\otimes \I_{\Y}+P\otimes T$ are rotations on $\X\widehat{\bigotimes}_{\pi}\Y$. 
Clearly this mapping defines a linear bijection on $\X\bigotimes\Y$, and it is required to check that it is an isometry. In fact, by symmetry it suffices to prove that $R$ is contractive. 
Towards this, given $\epsilon>0$, fix $\sum_{i=1}^{n}x_{i}\otimes y_{i}\in \X\bigotimes\Y$ such that $\sum_{i=1}^{n}\|x_{i}\|\ \|y_{i}\| \leq \|\sum_{i=1}^{n}x_{i}\otimes y_{i}\|+\epsilon$.
Then 
\[\left\|R\left(\sum_{i=1}^{n}x_{i}\otimes y_{i}\right)\right\|\leq \left\|(\I_{\X}-P)\otimes \I_{\Y})\left(\sum_{i=1}^{n}x_{i}\otimes y_{i}\right)\right\|+\left\|(P\otimes T)\left(\sum_{i=1}^{n}x_{i}\otimes y_{i}\right)\right\| \]
\[\leq \sum_{i=1}^{n}\|(\I-P)x_{i}\|\ \|y_{i}\|+\sum_{i=1}^{n}\|Px_{i}\|\ \|y_{i}\|=\sum_{i=1}^{n}\|x_{i}\|\ \|y_{i}\|.\]
Thus we have the claim.

Next, we wish to check that given any norm-$1$ element $v\in \X\widehat{\bigotimes}_{\pi}\Y$ it holds that 
$x\otimes y\in \overline{\conv}(\mathcal{G}_{\X\widehat{\bigotimes}_{\pi}\Y}(v))$. By using multiple times the triangle inequality and the $L$-likeness condition this task reduces to checking the following statement (while retaining the notations of $L$-likeness formulation). Namely, that the closed convex hull of any vector $v_{0}=\sum_{m}\sum_{i} a_{i,m}z_{i}\otimes y_{m}$ of norm-$1$ rotated with rotations type $R$ is the whole unit ball. Here we may assume that 

\[\sum_{i}\|z_{i}\|\  \left\|\sum_{m} a_{i,m}y_{m}\right \| +\epsilon \leq \left\|\sum_{m}\sum_{i} a_{i,m}z_{i}\otimes y_{m}\right\| \quad \mathrm{for}\ i.\] 

By tensor calculus rules we may normalize the previous presentation in such a way that $\|z_{i}\|=1$ for $i$. By the convex transitivity of $\Y$ we get
$\|\sum_{m} a_{i,m}y_{m}\|y\in \overline{\conv}(\mathcal{G}_{\Y}(\sum_{m} a_{i,m}y_{m}))$ for each $i$ and $y\in \S_{\Y}$.
Fix $y\in \S_{\Y}$ and for each $i$ let $T_{1,i},T_{2,i}\ldots,T_{N,i}\in \mathcal{G}_{\Y}$ be rotations such that 
\[\left\|\ \left\|\sum_{m} a_{i,m}y_{m}\right\|y- \frac{1}{N}\sum_{l=1}^{N} T_{l,i}\left(\sum_{m} a_{i,m}y_{m}\right)\right\|\leq \epsilon.\]
Then it can be seen that 
\[S=N^{-1}\sum_{l=1}^{N}((\I_{\X}-\sum_{i}P_{i})\otimes \I_{\Y}+P_{1}\otimes T_{l,1}+P_{2}\otimes T_{l,2}+\ldots+P_{k}\otimes T_{l,k})\]
is an average of $N$ many rotatations, which are obtained by composing $k$ many rotations of type $R$. 
Moreover, 
\[\left\|\sum_{i}z_{i}\otimes \left(\left\|\sum_{m} a_{i,m}y_{m}\right\|y\right) - Sv_{0}\right\|\leq \epsilon.\] 
Write $z=\sum_{i} \|\sum_{m} a_{i,m}y_{m}\| z_{i}$. Note that $z\otimes y\in \overline{\conv}(\mathcal{G}_{\X\widehat{\bigotimes}_{\pi}\Y}(v_{0}))$. 
Since $\|z\|\geq 1-\epsilon$ we obtain by using Lemma \ref{lm: tensorbigpoint} that $\X\widehat{\bigotimes}_{\pi}\Y$ is convex-transitive.
\end{proof}

\subsection{Some 'known' symmetric spaces}

Consider $h^p(B)$ spaces of harmonic functions, where $B$ is the open unit ball of $\R^{n}$. For $1<p<\infty$ there exists by applying the Poisson kernel an isometric isomorphism from $h^{p}(B)$ onto $L^{p}(S)$, where $S$ is the unit sphere, see \cite{SPW}. Similarly, for open half-space $\R^{n}_{+}$ and $1<p<\infty$ the space $h^{p}(\R^{n}_{+})$ is isometric to $L^{p}(\R^{n-1})$. Thus in both cases the harmonic Hardy space is \emph{almost transitive},
being isometric to $L^p$. We do not know if some other common function theory motivated function spaces (ibid. reference) are convex-transitive.
 
Let $\X$ be the space of absolutely continuous functions $f\colon [0,1]\to \R$ with bounded variation and vanishing at the boundary $\{0,1\}$. Then $\X$ endowed with the total variation norm is almost transitive. Indeed, $\X$ is isometric to the $1$-codimensional subspace of $L^{1}$ of all the functions with average $0$. It was pointed out in \cite{Talp} that this subspace is in turn almost transitive, even though it is neither isometric to $L^{1}$, nor contractively complemented in it.

\end{document}